\documentclass[11pt]{article}
\usepackage[letterpaper,left=1.1in,right=1.1in,top=1.1in,bottom=1.1in]{geometry}
\usepackage{amsmath,amsthm}

\theoremstyle{plain}
\newtheorem{theorem}{Theorem}
\newtheorem{lemma}[theorem]{Lemma}
\newtheorem{corollary}[theorem]{Corollary}

\theoremstyle{definition}
\newtheorem{definition}[theorem]{Definition}
\newcommand{\newsiamthm}[2]{\theoremstyle{definition}\newtheorem{#1}[theorem]{#2}}
\newcommand{\newsiamremark}[2]{\theoremstyle{remark}\newtheorem{#1}[theorem]{#2}}
\newcommand{\headers}[2]{}
\newcommand{\email}[1]{\texttt{#1}}
\newenvironment{keywords}%
  {\par\medskip\noindent\textbf{Key words. }\ignorespaces}{\par\medskip}
\newenvironment{AMS}%
  {\par\noindent\textbf{AMS subject classifications. }\ignorespaces}{\par\medskip}

\usepackage{amssymb}
\usepackage{bm}
\usepackage{graphicx}
\usepackage{booktabs}
\usepackage{subcaption}
\usepackage{xcolor}
\usepackage[normalem]{ulem}
\usepackage{url}

\definecolor{peiblue}{rgb}{0.00,0.25,0.75}
\definecolor{peigray}{rgb}{0.45,0.45,0.45}

\long\def\cjout#1{}
\newcommand{\TheTitle}{Freezing Calder\'on: Robust Preconditioning for Maxwell Scattering under Shape Uncertainty}

\title{\TheTitle}

\author{
Paul Escapil-Inchausp\'e\thanks{%
  Facultad de Ingenier\'ia y Ciencias, Universidad Adolfo Ib\'a\~nez, Santiago,
  Chile (\email{paul.escapil@edu.uai.cl}).}
\and
Carlos Jerez-Hanckes\thanks{%
  Department of Mathematics and Digital Futures, KTH Royal Institute of Technology,
    Lindstedtsv\"agen 25, Stockholm, Sweden; and Inria Chile, Apoquindo 2827, Las Condes,
    Santiago, Chile (\email{carlosjh@kth.se}).
    Corresponding author}
}

\headers{Robust Preconditioning for Maxwell Scattering}
{P.~Escapil-Inchausp\'e, C.~Jerez-Hanckes}

\newcommand{\bs}[1]{\boldsymbol{#1}}
\newcommand{\R}{\mathbb{R}}
\newcommand{\C}{\mathbb{C}}
\newcommand{\N}{\mathbb{N}}
\newcommand{\calT}{\mathcal{T}}
\newcommand{\calK}{\mathcal{K}}
\newcommand{\calP}{\mathcal{P}}
\newcommand{\calL}{\mathcal{L}}
\DeclareMathOperator{\dive}{div}
\DeclareMathOperator{\curl}{curl}
\newsiamremark{remark}{Remark}
\newsiamthm{assumption}{Assumption}
\makeatletter
\newenvironment{sgassumption}[2]{%
  \def\thetheorem{#1}\def\theassumption{#1}%
  \assumption[#2]}{\endassumption}
\makeatother

\newcommand{\Gnom}{\Gamma_0}
\newcommand{\Gpert}{\Gamma_{\bs y}}

\usepackage{tikz}
\usetikzlibrary{calc,arrows.meta}
\definecolor{nomblue}{HTML}{1565C0}
\definecolor{pergreen}{HTML}{2E7D32}

\newcommand{\meshpatch}[5]{%
  \coordinate (g1) at ($ (#2)!2/3!($ (#3)!0.5!(#4) $) $);
  \coordinate (g2) at ($ (#2)!2/3!($ (#3)!0.5!(#5) $) $);
  \coordinate (m12) at ($ (#2)!0.5!(#3) $);
  \coordinate (m13) at ($ (#2)!0.5!(#4) $);
  \coordinate (m23) at ($ (#3)!0.5!(#4) $);
  \coordinate (m14) at ($ (#2)!0.5!(#5) $);
  \coordinate (m24) at ($ (#3)!0.5!(#5) $);
  \fill[#1!12]
    (#2) -- (m13) -- (g1) -- (m12) -- cycle
    (#3) -- (m12) -- (g1) -- (m23) -- cycle
    (#2) -- (m14) -- (g2) -- (m12) -- cycle
    (#3) -- (m12) -- (g2) -- (m24) -- cycle;
  \draw[black!35,line width=0.22pt]
    (g1) -- (#2) (g1) -- (#3) (g1) -- (#4)
    (g1) -- (m12) (g1) -- (m13) (g1) -- (m23)
    (g2) -- (#2) (g2) -- (#3) (g2) -- (#5)
    (g2) -- (m12) (g2) -- (m14) (g2) -- (m24);
  \draw[#1,line width=0.8pt] (#2) -- (#3) -- (#4) -- cycle;
  \draw[#1,line width=0.8pt] (#2) -- (#3) -- (#5) -- cycle;
  \draw[#1,line width=1.9pt] (#2) -- (#3);
  \foreach \p in {#2,#3,#4,#5}{\fill[#1] (\p) circle (1.5pt);}
}

\usepackage[colorlinks,linkcolor=blue,citecolor=blue,urlcolor=blue]{hyperref}

\begin{document}

\maketitle

\begin{abstract}
  Calder\'on preconditioning restores mesh-independent conditioning for
  boundary-element discretizations of the Maxwell electric field integral
  equation, but assembling the dual barycentric operator can dominate the
  computational cost. We show that this preconditioner can be assembled once
  at a nominal geometry and reused across connectivity-preserving shape
  perturbations. Exact combinatorial alignment of the RWG--BC degrees of
  freedom transports the duality pairing, leaving only the dual-mesh operator
  frozen. Under uniform discrete-stability and geometric operator-continuity
  assumptions, a bi-parametric operator-preconditioning analysis yields a
  spectral-condition-number bound uniform in the mesh size and, after
  tensorization, in the stochastic-Galerkin dimension. Numerical experiments
  on smooth and non-smooth scatterers show that the frozen preconditioner
  closely tracks freshly assembled Calder\'on preconditioners over
  substantial shape variations. A Monte Carlo campaign on a NASA almond, whose
  admissibility is screened before each shape is constructed so that no
  realization is rejected, separates the two strategies by less than the variation between
  realizations, and mesh refinement there leaves the frozen iteration count
  unchanged. Reuse yields roughly $30\times$ measured speedup per new
  realization in our dense implementation; a cost-model estimate based on
  published compressed-assembly timings gives roughly $2\times$.
\end{abstract}

\begin{keywords}
  Calder\'on preconditioning, operator preconditioning, boundary element
  method, electric field integral equation, shape uncertainty
  quantification
\end{keywords}

\begin{AMS}
  65N38, 65F08, 65N12, 65R20, 78M15
\end{AMS}

\section{Introduction}
\label{sec:intro}

Time-harmonic electromagnetic scattering by a perfect electric conductor
(PEC) is classically reduced, via the electric field integral equation
(EFIE), to a boundary integral equation posed on the scatterer's
surface~$\Gamma$. Galerkin discretization by Rao--Wilton--Glisson (RWG)
elements yields dense, indefinite linear systems whose condition number
grows without bound as the mesh is refined (classically as $(\kappa
h)^{-2}$~\cite{AndriulliEtAl2008,AdrianAndriulliEibert2019}),
reflecting the different pseudodifferential orders of the EFIE's
solenoidal and non-solenoidal components. Calder\'on
(operator) preconditioning exploits the second-kind Calder\'on identity
$\calT \circ \calT = -\tfrac14 \mathcal I + \calK^2$, with $\calK$
compact on sufficiently smooth surfaces~\cite{ChristiansenNedelec2002},
to precondition the EFIE with itself, restoring mesh-independent
GMRES convergence~\cite{ChristiansenNedelec2002,Hiptmair2006}. In
practice this requires a \emph{dual}, div-conforming discretization
stable under the $L^2(\Gamma)$ pairing with the primal RWG space,
typically Buffa--Christiansen (BC) functions built on a barycentric
refinement of the original mesh~\cite{BuffaChristiansen2007}, as in the
multiplicative Calder\'on preconditioner
of~\cite{AndriulliEtAl2008}. Assembling the
preconditioning operator therefore costs several times more than assembling the
primal EFIE matrix itself (cf.~Section~\ref{sec:numanalysis}). Reducing this overhead has motivated a substantial
body of work, including refinement-free Calder\'on preconditioners that
eliminate the barycentric dual discretization
altogether~\cite{AdrianAndriulliEibert2019}, opposite-order and
bi-parametric operator
preconditioning~\cite{EscapilInchauspeJerezHanckes2021}, and $\mathcal
H$-matrix-compressed and reduced-accuracy Calder\'on
preconditioners~\cite{EscapilInchauspeJerezHanckes2019,
FierroJerezHanckes2020, KleanthousEtAl2022}, all of which reduce the
\emph{per-solve} cost of Calder\'on preconditioning for a single fixed
geometry and wavenumber.

Reusing a preconditioner across a sequence of related linear systems is a
classical idea in numerical linear algebra. Preconditioner updating constructs
cheap corrections to a factorization computed earlier in the
sequence~\cite{BenziBertaccini2003}, and Krylov
subspace recycling retains an approximate invariant subspace and redeploys it
across solves~\cite{ParksEtAl2006}. Both address sequences generated
algebraically, typically by a time step or a Newton update, in which the
matrix changes while the discrete space does not. Closest to the question
asked here, Graham, Pembery and Spence~\cite{GrahamPemberySpence2021} analyse
when the Galerkin matrix of one heterogeneous Helmholtz discretization
preconditions another, with wavenumber-explicit bounds on how far the
coefficients may move, and are likewise motivated by uncertainty
quantification; there the perturbation is in the material coefficients and
both matrices live on the same mesh. Here the perturbation is
geometric: the matrices live on different surfaces, and their
degree-of-freedom correspondence, once established, is an exact permutation
rather than an update formula. Neither technique has a counterpart
here: there is no factorization to correct and no subspace to carry over,
only the assembled dual-mesh operator, and it is that assembly which is paid
once and reused. Freezing is therefore the method here
rather than the baseline such schemes improve upon, which is what makes the
Calder\'on/BC setting the natural place to ask whether a preconditioner can
be frozen outright.

This paper addresses a question complementary to those per-solve cost
reductions, and in many-query settings equally important: given a family of
\emph{nearby} EFIE problems,
obtained by randomly or deterministically perturbing the scatterer's
shape, as arises in shape uncertainty quantification
(UQ)~\cite{AylwinJerezHanckesSchwabZech2020,
AylwinJerezHanckesSchwabZech2023, EscapilInchauspeJerezHanckes2024},
shape optimization, or inverse scattering, or by sweeping the
wavenumber, as arises in broadband simulation, must the Calder\'on
preconditioner be re-assembled at every query? In Monte Carlo and
multilevel domain-UQ pipelines for
computational
electromagnetics~\cite{AylwinJerezHanckesSchwabZech2020,
AylwinJerezHanckesSchwabZech2023}, hundreds to thousands of
geometrically nearby systems must be assembled and solved. The analysis
below covers both regimes: sampling methods
reuse $\bs P_0$ realization by realization, while intrusive
stochastic Galerkin formulations may use $I \otimes \bs P_0$ on the
coupled system (Corollary~\ref{cor:bochner}). For shape perturbations the
answer is no: a preconditioner assembled once at $\Gnom$ can be transported
by a degree-of-freedom (DOF) correspondence induced by the perturbation map
(Figure~\ref{fig:transport}) and reused unchanged on $\Gpert$, an operation
we call \emph{freezing} the Calder\'on preconditioner.

Once the perturbed systems are represented in a common nominal
discrete basis, reuse of the nominal preconditioner can be viewed as a
bi-parametric operator-preconditioning problem in the sense
of~\cite{EscapilInchauspeJerezHanckes2021}. What that framework needs of
the geometry is a single operator-norm estimate
(Assumption~\ref{ass:geocont}), so the argument applies to any parametric
family for which one is available. We focus on Calder\'on preconditioning
of the EFIE because, although expensive to assemble, it consistently outperforms
other EFIE preconditioners in iteration count and spectral
properties~\cite{EscapilInchauspeJerezHanckes2019,FierroPiccardoBetcke2023}. This
is precisely what the bi-parametric framework rewards: its bound inherits the
nominal constant $K_\star$ and degrades it only by the factor $(1+\nu)/(1-\nu)$,
so a well-conditioned nominal preconditioner, assembled once, remains
uniformly effective across the whole family. Van Harten and
Scarabosio~\cite{VanHartenScarabosio2026} recently considered the
complementary problem of placing several preconditioners across the
parameter domain for the Helmholtz equation.


\subsection*{Contributions}
We freeze the Calder\'on preconditioner: the dual-mesh operator is
assembled once at the nominal geometry and transported to each perturbed
geometry by a degree-of-freedom correspondence read off from the perturbation
map. Because an admissible perturbation moves mesh vertices without altering
connectivity, that correspondence is a permutation, so the transport is exact
and contributes no approximation of its own (Section~\ref{sec:implementation},
Lemma~\ref{lem:nxy-exact}). The numerics confirm it: the transported pairing
matrix agrees with the freshly assembled one to machine precision, while the
frozen dual-mesh operator does not.

Analytically, we identify such a perturbation of the Piola-pulled-back Maxwell
EFIE as an $(h,\nu)$-perturbation in the sense of the bi-parametric framework
of~\cite{EscapilInchauspeJerezHanckes2021}, with $\nu$ proportional to the
amplitude and constants uniform in $h$, and obtain from it an explicit bound
on the spectral condition number of the frozen-preconditioned system
(Theorem~\ref{thm:main}), uniform in the stochastic approximation dimension as
well for intrusive stochastic Galerkin discretizations
(Corollary~\ref{cor:bochner}). Numerically, we test the resulting frozen
preconditioner under shape, mesh and stochastic refinement on a sphere and a
triaxial ellipsoid, probe its boundary of validity on a Fichera corner, and
run a hundred-sample campaign on a three-wavelength NASA almond
(Section~\ref{sec:almond}), whose shapes are drawn from the admissible class
itself rather than prescribed. Across the smooth geometries freezing closely
tracks freshly assembled Calder\'on preconditioning; the Fichera experiments
identify perturbation of a reentrant singularity as the difficult case; and
the almond campaign tests admissibility, mesh refinement and many-query cost.
Freezing across \emph{wavenumber} rather than shape is a different problem,
and Theorem~\ref{thm:main} does not cover it: the penalty there is
asymmetric, saturating below the preconditioner's wavenumber and growing
without evident bound above it (Section~\ref{sec:kappamismatch}).

The companion work~\cite{EscapilInchauspeJerezHanckes2026} establishes
shape holomorphy of the pulled-back EFIE operator, together with the
parametric approximation rates that holomorphy yields. It says nothing about
preconditioning, about the conditioning of the discrete system, or about
reusing an operator across a family of geometries. From it we take a single
operator-norm estimate (Lemma~\ref{lem:shapepert}); everything downstream of
Assumption~\ref{ass:geocont} is established here.

\section{Mathematical framework}
\label{sec:framework}

\subsection{The electric field integral equation}
\label{sec:efie}
Let $D \subset \R^3$ be a bounded Lipschitz domain (the PEC scatterer)
with boundary $\Gamma := \partial D$ and exterior unit normal $\bs n$,
and let $D^c := \R^3 \setminus \overline D$ be the (unbounded) exterior
domain. For wavenumber $\kappa > 0$, the scattered field $\bs
U:D^c\to\C^3$ solves
\begin{align}
  \curl\curl \bs U(\bs x) - \kappa^2 \bs U(\bs x) &= \bs 0,
    &&\bs x \in D^c, \label{eq:helmholtz}\\
  \bs n \times \bs U &= -\bs n \times \bs U^{\mathrm{inc}},
    &&\bs x \in \Gamma, \label{eq:pec}\\
  \left|\curl \bs U(\bs x) \times \hat{\bs x} - \iota\kappa \bs
    U(\bs x)\right| &= o(\|\bs x\|^{-1}), && \|\bs x\| \to \infty,
    \label{eq:silvermuller}
\end{align}
where $\bs U^{\mathrm{inc}}$ is a known incident field and
\eqref{eq:silvermuller} is the Silver--M\"uller radiation condition.
Reducing~\eqref{eq:helmholtz}--\eqref{eq:silvermuller} to $\Gamma$ via
the free-space Green's function $$G_\kappa(\bs x,\bs y) :=
e^{-\iota\kappa\|\bs x-\bs y\|}/(4\pi\|\bs x-\bs y\|)$$ yields the EFIE:
find the surface current $\bs j$ such that
\begin{equation}
  \label{eq:efie}
  \begin{aligned}
  \calT(\bs j) &:= -\iota\kappa \bs n \times \int_\Gamma
    G_\kappa(\bs x,\bs y)\, \bs j(\bs y)\, d\bs y
  + \frac{1}{\iota\kappa} \bs n \times \nabla_{\bs x}
    \int_\Gamma G_\kappa(\bs x,\bs y)\, \dive_\Gamma \bs j(\bs y)\,
    d\bs y \\
  &\hphantom{:}= -\bs n \times \bs U^{\mathrm{inc}} =: \bs g.
  \end{aligned}
\end{equation}
Operator $\calT$ is a singular, non-local, first-kind boundary
integral operator whose solenoidal and non-solenoidal components have
different pseudodifferential orders; its Galerkin matrices, using RWG
basis functions, are dense and indefinite, and exhibit
dense-discretization breakdown. The condition number deteriorates
algebraically as $h \to 0$, classically as $(\kappa h)^{-2}$ in
coefficient norms~\cite{AndriulliEtAl2008,AdrianAndriulliEibert2019},
and deteriorates further as $\kappa \to 0$ or $\kappa \to \infty$.

Throughout the paper we assume that $\kappa$ is \emph{not} an interior
Maxwell resonance of the nominal scatterer $D_0$, i.e.\ that
$\kappa^2$ is not an interior Maxwell eigenvalue of $D_0$; this
excludes the classical interior-resonance failure of the EFIE at
discrete real wavenumbers and guarantees that the nominal operator
$\calT$ is injective, hence (by Fredholm theory on the trace spaces of
Section~\ref{sec:shapebound}) invertible, so that the nominal
\emph{continuous} inf-sup condition holds~\cite{BuffaHiptmair2003}. The
corresponding $h$-uniform \emph{discrete} inf-sup constants are not claimed
to follow from this alone; they are part of
Assumption~\ref{ass:stability}. For the perturbed family,
the geometric continuity estimate of Assumption~\ref{ass:geocont}
(Section~\ref{sec:shapebound}) and the perturbation framework of
Section~\ref{sec:oppg} then guarantee persistence of the
\emph{discrete} inf-sup stability required below for all sufficiently
small perturbation amplitudes, so no separate discrete non-resonance
assumption is needed for each perturbed geometry.

\subsection{Calder\'on preconditioning}
\label{sec:calderon}
At the continuous level, the operator $\calT$ satisfies the second-kind Calder\'on
identity
\begin{equation}
  \label{eq:calderon-identity}
  (\calT \circ \calT)(\bs j) = \calT^2(\bs j)
    = -\frac14 \bs j + \calK^2(\bs j), \qquad
  \calK(\bs j) := \bs n \times \nabla_{\bs x} \int_\Gamma
    G_\kappa(\bs x,\bs y)\, \bs j(\bs y)\, d\bs y.
\end{equation}
The identity itself holds on Lipschitz surfaces; the
compact-perturbation-of-identity interpretation, namely that $\calK$ is
compact, so that $\calT^2$ is a second-kind operator, additionally requires
sufficient smoothness of $\Gamma$, and is classical for smooth
surfaces~\cite{ChristiansenNedelec2002}. On merely Lipschitz or
polyhedral surfaces $\calK$ is in general not compact, and Calder\'on
preconditioning is then best understood through the operator
preconditioning framework of~\cite{Hiptmair2006} (mesh-independent
bounds without a second-kind interpretation), which is the viewpoint
adopted throughout this paper. Preconditioning~\eqref{eq:efie} by (a
discretization of) $\calT$ itself yields a well-conditioned system.
Discretely, stability of the resulting pairing requires a
\emph{dual} basis, i.e.~Buffa--Christiansen functions defined on the
barycentric refinement of the primal mesh~\cite{BuffaChristiansen2007}.
Writing $X_h$ for the RWG space and $Y_h$ for the BC space
on a mesh of size $h$, one assembles $\bs Z_{XX} := \bs A_h[X_h,X_h]$
(the primal EFIE matrix), $\bs T_{YY} := \bs A_h[Y_h,Y_h]$ (the dual
discretization), and $\bs N_{XY}$, the $X_h$--$Y_h$ duality pairing, and
solves $\bs Z_{XX} \bs u = \bs b$ with left preconditioner $\bs P :=
\bs N_{XY}^{-\top}\bs T_{YY}\bs N_{XY}^{-1}$, applied via inner GMRES
solves against $\bs N_{XY}$. Because the barycentric mesh has six times
as many triangles as the primal mesh, assembling $\bs T_{YY}$ typically
costs several times more than $\bs Z_{XX}$: for our sphere test case at
$h=0.1$, $\kappa=2$ ($4827$ degrees of freedom), $\bs T_{YY}$ alone
accounts for 430\,s out of 442\,s of total assembly time, 97\%
(Table~\ref{tab:componenttiming}). This is the cost the present paper
seeks to amortize across many queries rather than reduce per query.

\subsection{Bi-parametric operator preconditioning}
\label{sec:oppg}
Our results build on the operator preconditioning (Petrov--Galerkin,
OP-PG) framework of~\cite{Hiptmair2006}, extended to \emph{bi-parametric}
perturbed forms by~\cite{EscapilInchauspeJerezHanckes2021}, to which we refer
for full statements and proofs. Let $X,Y,V,W$ be reflexive Banach spaces, let
$\mathsf a \in \mathcal L(X\times Y;\C)$ have discrete inf-sup constant
$\gamma_{\mathsf a}$ and continuity constant $\|\mathsf a\|$, and let $\mathsf
c \in \mathcal L(V \times W;\C)$, with pairings $\mathsf n \in \mathcal L(V
\times Y;\C)$ and $\mathsf m \in \mathcal L(X \times W;\C)$, induce the
preconditioned operator equation
\begin{equation}
  \label{eq:op-pg}
  \text{find } u \in X \text{ such that } \mathrm{P}\mathrm{A} u = \mathrm{P} b,
  \qquad \mathrm{P} := \mathrm{M}^{-1}\mathrm{C}\mathrm{N}^{-1}.
\end{equation}

\begin{definition}[$(h,\nu)$-perturbation]
  \label{def:perturbation}
  (\cite[Def.~1]{EscapilInchauspeJerezHanckes2021})
  Let $\nu \in [0,1)$ and $h>0$. A form $\mathsf a_\nu \in \mathcal
  L(X\times Y;\C)$ is an $(h,\nu)$-perturbation of $\mathsf a$,
  $\mathsf a_\nu \in \Phi_{h,\nu}(\mathsf a)$, if
  \begin{equation}
    \gamma_{\mathsf a}^{-1} \left| \mathsf a(u_h,v_h) - \mathsf
    a_\nu(u_h,v_h) \right| \le \nu \|u_h\|_X \|v_h\|_Y,
    \qquad \forall\, u_h \in X_h,\ v_h \in Y_h.
  \end{equation}
\end{definition}

For $\mu,\nu \in [0,1)$ we take $\mathsf c_\mu \in \Phi_{h,\mu}(\mathsf
c)$ and $\mathsf a_\nu \in \Phi_{h,\nu}(\mathsf a)$, and write $\mathrm{P}_\mu
\mathrm{A}_\nu$ for the bi-parametric preconditioned operator.

\begin{theorem}[Bi-parametric operator preconditioning]
  \label{thm:ejh-op}
  (\cite[Thm.~2]{EscapilInchauspeJerezHanckes2021})
  For $\mu,\nu \in [0,1)$ and $h>0$, the spectral condition number of
  the bi-parametric preconditioned matrix satisfies
  \begin{equation}
    \label{eq:kappastar}
    \kappa_S(\bs P_\mu \bs A_\nu) \le K_\star
      \left(\frac{1+\mu}{1-\mu}\right)\left(\frac{1+\nu}{1-\nu}\right)
      =: K_{\star,\mu,\nu}, \qquad
    K_\star := \frac{\|\mathsf m\|\,\|\mathsf n\|\,\|\mathsf c\|\,
      \|\mathsf a\|}{\gamma_{\mathsf m}\gamma_{\mathsf n}\gamma_{\mathsf
      c}\gamma_{\mathsf a}}.
  \end{equation}
\end{theorem}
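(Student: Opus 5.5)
The plan is to reproduce the operator-preconditioning argument of~\cite{Hiptmair2006}, with the unperturbed discrete inf-sup and continuity constants replaced by perturbed ones that are controlled through Definition~\ref{def:perturbation}. Work in the discrete spaces $X_h,Y_h,V_h,W_h$ of equal dimension. Write $\bs A_\nu,\bs C_\mu,\bs M,\bs N$ for the Galerkin matrices of $\mathsf a_\nu,\mathsf c_\mu,\mathsf m,\mathsf n$, and $\bs D_X,\bs D_Y,\bs D_V,\bs D_W$ for the Gram matrices of the respective norms. The first step records the perturbed constants. For $\mathsf a_\nu\in\Phi_{h,\nu}(\mathsf a)$, the triangle inequality applied to Definition~\ref{def:perturbation} gives
\[
|\mathsf a_\nu(u_h,v_h)|\le (\|\mathsf a\|+\nu\gamma_{\mathsf a})\|u_h\|_X\|v_h\|_Y\le \|\mathsf a\|(1+\nu)\|u_h\|_X\|v_h\|_Y ,
\]
using $\gamma_{\mathsf a}\le\|\mathsf a\|$. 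Taking the supremum over $v_h$ in the inf-sup quotient gives
\[
\sup_{v_h}\frac{|\mathsf a_\nu(u_h,v_h)|}{\|v_h\|_Y}\ge \gamma_{\mathsf a}\|u_h\|_X-\nu\gamma_{\mathsf a}\|u_h\|_X=(1-\nu)\gamma_{\mathsf a}\|u_h\|_X .
\]
Hence $\gamma_{\mathsf a_\nu}\ge(1-\nu)\gamma_{\mathsf a}>0$ and $\|\mathsf a_\nu\|\le(1+\nu)\|\mathsf a\|$. The same argument yields $\gamma_{\mathsf c_\mu}\ge(1-\mu)\gamma_{\mathsf c}$ and $\|\mathsf c_\mu\|\le(1+\mu)\|\mathsf c\|$. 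Since the spaces are finite-dimensional and of equal dimension, inf-sup in one argument gives invertibility, so $\bs A_\nu$ and $\bs C_\mu$ are invertible and $\bs P_\mu=\bs M^{-1}\bs C_\mu\bs N^{-1}$ is well defined.

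The second step is the standard OP-PG bound applied to the perturbed forms. In the $\bs D_X$-weighted norm, $\bs A_\nu$ acts as a map $X_h\to Y_h'$ with norm at most $\|\mathsf a_\nu\|$ and inverse norm at most $\gamma_{\mathsf a_\nu}^{-1}$. Likewise $\bs N^{-1}:Y_h'\to V_h$ is bounded by $\gamma_{\mathsf n}^{-1}$, $\bs C_\mu:V_h\to W_h'$ by $\|\mathsf c_\mu\|$, and $\bs M^{-1}:W_h'\to X_h$ by $\gamma_{\mathsf m}^{-1}$, with the analogous bounds for the inverses. Composing these maps bounds $\|\bs P_\mu\bs A_\nu\|$ and $\|(\bs P_\mu\bs A_\nu)^{-1}\|$ in the $X_h$-norm. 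For any eigenvalue $\lambda$, the first bound gives $|\lambda|\le\|\bs P_\mu\bs A_\nu\|$ and the second gives $|\lambda|^{-1}\le\|(\bs P_\mu\bs A_\nu)^{-1}\|$. Therefore
\[
\kappa_S(\bs P_\mu\bs A_\nu)\le \frac{\|\mathsf m\|\|\mathsf n\|\|\mathsf c_\mu\|\|\mathsf a_\nu\|}{\gamma_{\mathsf m}\gamma_{\mathsf n}\gamma_{\mathsf c_\mu}\gamma_{\mathsf a_\nu}} .
\]
Inserting the bounds from the first step produces $K_\star\frac{1+\mu}{1-\mu}\frac{1+\nu}{1-\nu}$.

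The main obstacle is the bookkeeping in the second step: one must make sure the composition is measured consistently so that the spectral (eigenvalue) condition number, not a norm condition number in Euclidean coordinates, is what gets bounded. I would handle this by observing that the spectrum is basis-invariant, so measuring $\bs P_\mu\bs A_\nu$ in any norm on $X_h$ bounds $|\lambda|$, and the $\bs D_X$-norm is the one in which every factor's constant is exactly the form constant. A secondary point is to use the discrete inf-sup constants throughout, for both argument orderings where needed. Only the inf-sup condition in the sup-over-the-test-space form is required, plus the equal-dimension assumption, which is implicit in the framework of~\cite{EscapilInchauspeJerezHanckes2021}.
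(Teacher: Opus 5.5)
Your proof is correct and takes essentially the same route as the result the paper imports from \cite[Thm.~2]{EscapilInchauspeJerezHanckes2021}: you derive $\gamma_{\mathsf a_\nu}\ge(1-\nu)\gamma_{\mathsf a}$ and $\|\mathsf a_\nu\|\le(1+\nu)\|\mathsf a\|$ (and likewise for $\mathsf c_\mu$), which, since Definition~\ref{def:perturbation} only controls discrete pairs, should be read as the constants on the discrete spaces, and then bound $\varrho(\bs P_\mu\bs A_\nu)$ and $\varrho((\bs P_\mu\bs A_\nu)^{-1})$ by the norms of the composed maps $X_h\to Y_h'\to V_h\to W_h'\to X_h$. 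The Gram matrices are unnecessary, since any operator norm induced by $\|\cdot\|_X$ on $X_h$ bounds the spectral radius, so your argument also holds in the reflexive Banach setting in which the theorem is stated.
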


Here and throughout, $\kappa_S$ is the spectral condition number in the
sense of~\cite[eq.~(20)]{EscapilInchauspeJerezHanckes2021}, namely
\begin{equation}
  \label{eq:kappaS-def}
  \kappa_S(\bs M) \;:=\; \varrho(\bs M)\,\varrho(\bs M^{-1})
  \;=\; \frac{|\lambda_{\max}(\bs M)|}{|\lambda_{\min}(\bs M)|},
\end{equation}
with $\varrho$ the spectral radius. It agrees with the Euclidean
$\kappa_2$ only for normal $\bs M$, so a bound on $\kappa_S$ does not by
itself guarantee Krylov convergence, whence the separate hypothesis of
Theorem~\ref{thm:ejh-gmres}. The Euclidean counterpart
of~\eqref{eq:kappastar}, $\kappa_2 (\bs P_\mu \bs A_\nu)\le K_{\star,\mu,\nu}K_{\Lambda_h}^2$,
carries the conditioning $K_{\Lambda_h}$ of the synthesis operator of $X_h$,
$h$-dependent for RWG-type
spaces~\cite[eq.~(45)]{EscapilInchauspeJerezHanckes2021}; the $h$-uniform
statement is therefore the one for $\kappa_S$.

\begin{theorem}[GMRES$(m)$ linear convergence]
  \label{thm:ejh-gmres}
  (\cite[Thm.~4]{EscapilInchauspeJerezHanckes2021})
  Under coercivity-type
  hypothesis~\cite[Assumption~4]{EscapilInchauspeJerezHanckes2021}
  for the bi-parametric
  preconditioned system \eqref{eq:op-pg}, the weighted GMRES$(m)$
  residual factor after $k$ steps, $1 \le k,m \le N$, satisfies
  \begin{equation}
    \label{eq:gmres-bound}
    \Theta_k^{(m)} \le \left(1 - \frac{1}{K_{\star,\mu,\nu}}\right)^{1/2}.
  \end{equation}
\end{theorem}

The constants entering $K_\star$ are \emph{discrete} quantities attached
to each mesh level $h$, so~\eqref{eq:kappastar}--\eqref{eq:gmres-bound} are
$h$-uniform only if they are. We make that a standing hypothesis.

\begin{sgassumption}{S}{standing assumption: $h$-uniform discrete stability}
  \label{ass:stability}
  There exist $h_0 > 0$ and constants $\underline\gamma > 0$,
  $\overline M < \infty$ such that, for all $0 < h \le h_0$, the
  nominal Galerkin families and duality pairings satisfy
  \begin{equation}
    \min\{\gamma_{\mathsf m}, \gamma_{\mathsf n}, \gamma_{\mathsf c},
    \gamma_{\mathsf a_0}\} \;\ge\; \underline\gamma, \qquad
    \max\{\|\mathsf m\|, \|\mathsf n\|, \|\mathsf c\|, \|\mathsf
    a_0\|\} \;\le\; \overline M .
  \end{equation}
  In particular $K_\star \le (\overline M/\underline\gamma)^4$
  uniformly in $h \le h_0$, and $\underline\gamma_{\mathsf
  a_0} := \inf_{0<h\le h_0} \gamma_{\mathsf a_0,h} \ge
  \underline\gamma$.
\end{sgassumption}

Assumption~\ref{ass:stability} collects the uniform stability properties
that the abstract OP-PG framework requires, and we retain them as a
hypothesis rather than reproduce their proofs. The abstract constants have
concrete counterparts here. The pair $(\gamma_{\mathsf n},\|\mathsf n\|)$,
and its transpose $(\gamma_{\mathsf m},\|\mathsf m\|)$, are those of the
RWG--BC duality pairing realized by $\bs N_{XY}$; their $h$-uniformity
follows from the dual finite element complex on the barycentric
refinement~\cite{BuffaChristiansen2007}. The pairs $(\gamma_{\mathsf
a_0},\|\mathsf a_0\|)$ and $(\gamma_{\mathsf c},\|\mathsf c\|)$ are those of
the nominal EFIE Galerkin form on $X_h \times Y_h$ and on $Y_h$
respectively; away from interior Maxwell resonances, uniform discrete
stability for such forms holds under the conforming-approximation and
discrete-compactness hypotheses underlying Maxwell
BEM~\cite{BuffaHiptmair2003}. We do not assert that the generalized
G\aa rding inequality alone yields the $h$-uniform discrete inf-sup constants
in the precise Petrov--Galerkin configuration used here; that is part of what
Assumption~\ref{ass:stability} assumes. It is in any case the standard
hypothesis under which operator preconditioning delivers mesh-independent
bounds~\cite{Hiptmair2006}, not a restriction peculiar to this paper.

Theorems~\ref{thm:ejh-op} and~\ref{thm:ejh-gmres} were originally applied
in~\cite{EscapilInchauspeJerezHanckes2021} to \emph{numerical}
perturbations, such as coarser quadrature and low-rank compression, of an
otherwise fixed variational problem. 
The first step here is to recognize
that a \emph{geometric} perturbation of the scatterer, for a fixed
discretization strategy, induces the same $(h,\nu)$-perturbation
structure on the EFIE bilinear form, with $\nu$ proportional to the
perturbation amplitude and with constants uniform in $h$; we make this
precise next.

\section{Robustness of the frozen preconditioner to shape perturbation}
\label{sec:theory}

\subsection{Admissible shape perturbations}
\label{sec:shapeperts}
The parametric framework below is formulated on Lipschitz boundaries;
additional smoothness enters only in Lemma~\ref{lem:shapepert}. We
follow the nominal-domain parametrization that is standard in shape UQ
for computational
electromagnetics~\cite{AylwinJerezHanckesSchwabZech2020,
AylwinJerezHanckesSchwabZech2023}; see
also~\cite{DoelzHenriquez2024} for the same affine-parametric
boundary description in the boundary-integral context. Let $\Gnom =
\partial D_0$ be
a fixed nominal Lipschitz boundary, let $\{\bs\psi_j\}_{j\in\N} \subset
C^{1,1}(\R^3;\R^3)$ be a summable sequence of deformation fields,
$\sum_j \|\bs\psi_j\|_{C^{1,1}} < \infty$, and let $\bs y \in U$, with
$U := [-1,1]^{\N}$, be a \emph{deterministic} parameter sequence; all
estimates below hold uniformly over $\bs y \in U$. (For the
uncertainty-quantification interpretation, $U$ may additionally be
equipped with the product uniform probability measure, so that the
$y_j$ become i.i.d.\ $\mathrm{Unif}(-1,1)$ random variables and every
uniform-in-$\bs y$ statement holds in particular almost surely; none
of the analysis requires this probabilistic structure.) Define the
perturbed boundary map and its induced boundary
\begin{equation}
  \label{eq:pertmap}
  \bs\varphi_{\bs y} := \mathrm{Id} + \varepsilon \sum_{j\in\N} y_j \bs\psi_j :
    \Gnom \to \R^3, \qquad
  \Gpert := \bs\varphi_{\bs y}(\Gnom),
\end{equation}
where $\varepsilon \ge 0$ is the (deterministic) perturbation
\emph{amplitude}. By the summability assumption, $\|\bs\varphi_{\bs y} -
\mathrm{Id}\|_{C^{1,1}(\Gnom)} \le C_0\varepsilon$ for every $\bs y
\in U$, with $C_0 := \sum_j \|\bs\psi_j\|_{C^{1,1}}$ independent of $\bs
y$. We assume $\varepsilon$ is small enough that $\bs\varphi_{\bs y}$ is,
for every $\bs y \in U$, a bi-Lipschitz diffeomorphism onto its image
(guaranteed for $\varepsilon < 1/(2C_0)$ by a standard Neumann-series
argument on $d\bs\varphi_{\bs y}$), so that $\Gpert$ is itself a Lipschitz
boundary and mesh connectivity is preserved: a triangulation $\{\Gnom^{(i)}\}$ of $\Gnom$ induces, via
$\bs\varphi_{\bs y}$, a triangulation $\{\Gpert^{(i)} := \bs\varphi_{\bs y}(\Gnom^{(i)})\}$ of $\Gpert$ with \emph{identical connectivity}:
only vertex coordinates move. Section~\ref{sec:implementation} exploits
this at the implementation level: while the RWG and BC basis
\emph{functions} themselves depend on the vertex geometry, their
canonical \emph{indexing}, which primal-mesh edge each basis
function is attached to, is determined by mesh connectivity alone, so
connectivity-preserving deformations induce an exact, canonical
correspondence between the degrees of freedom of $X_h(\Gnom)$ and
$X_h(\Gpert)$ (and likewise for $Y_h$), realized in practice by a
permutation matrix, with no geometric interpolation or re-meshing
required.

\subsection{A shape-perturbation bound for the EFIE bilinear form}
\label{sec:shapebound}

We first instantiate the abstract spaces of Section~\ref{sec:oppg} for
the EFIE. On the nominal boundary, the energy space of the EFIE is the
tangential trace space
\begin{equation}
  \label{eq:tracespace}
  X \;:=\; \bs H^{-1/2}(\dive_\Gamma, \Gnom)
\end{equation}
of tangential fields with components in $\bs H^{-1/2}(\Gnom)$ and
surface divergence in the space $H^{-1/2}(\Gnom)$~\cite{BuffaCostabelSheen2002},
on which $\calT: X \to X'$ is bounded and satisfies a generalized
G\aa rding inequality~\cite{BuffaHiptmair2003}; the dual $X'$ is
canonically identified with $\bs H^{-1/2}(\mathbf{curl}_\Gamma,\Gnom)$
under the anti-symmetric rotated duality pairing $\langle \bs u, \bs
v\rangle_{\times,\Gnom} := \int_{\Gnom} (\bs u \times \bs n)\cdot \bs
v$. We take $Y := X$ and define the nominal EFIE bilinear form
\begin{equation}
  \mathsf a_0(\bs u,\bs v) := \langle \calT_{\Gnom}\bs u, \bs
  v\rangle_{\times,\Gnom}, \qquad \bs u \in X,\ \bs v \in Y.
\end{equation}
Let $X_h, Y_h \subset X$ denote div-conforming RWG/BC-type spaces on
an exact, surface-fitted triangulation of $\Gnom$: the RWG space $X_h$
on the primal mesh and the BC space $Y_h$ on its barycentric
refinement~\cite{BuffaChristiansen2007}, understood, on curved
elements, as the Piola images of the reference RWG/BC shape functions
(so that the fields are piecewise polynomial in reference coordinates,
though not generally in physical coordinates). Only conformity in $X$
is used below; no finite-dimensionality or inverse-estimate argument
enters, which is what makes the resulting constants independent of
$h$. Our \emph{implementation},
however, uses straight-sided triangles, whose RWG/BC functions live on
a polyhedral approximation $\Gamma_{0,h}$ of the smooth surface
$\Gnom$ rather than on $\Gnom$ itself, and therefore sits outside this
literal conforming setting; Remark~\ref{rmk:variationalcrime}
discusses the status of that distinction.

Next we pull the perturbed problem back to these fixed nominal spaces.
For $r \in [0,\varepsilon]$ set $\bs\varphi_r := \mathrm{Id} + r\bs V$ with
fixed direction $\bs V := \sum_j y_j \bs\psi_j$, $\Gamma_r :=
\bs\varphi_r(\Gnom)$ (so $\Gamma_\varepsilon = \Gpert$), and let
\begin{equation}
  \calP_r : \bs H^{-1/2}(\dive_\Gamma,\Gnom) \longrightarrow
            \bs H^{-1/2}(\dive_\Gamma,\Gamma_r)
\end{equation}
be the contravariant (Piola) surface transform associated with
$\bs\varphi_r$. This is the standard div-conformity-preserving transport, which is
a uniformly bounded isomorphism for $r\|\bs V\|_{C^{1,1}}$ small
\cite{CostabelLeLouer2012b}. Let $\calP_r' : \bs H^{-1/2}(\mathbf{curl}_\Gamma,\Gamma_r) \to \bs
H^{-1/2}(\mathbf{curl}_\Gamma,\Gnom)$ be the transport dual to $\calP_r$ with
respect to the rotated Maxwell pairings, defined by
\begin{equation}
  \label{eq:piola-dual}
  \big\langle \calP_r'\bs f, \bs v\big\rangle_{\times,\Gnom}
  \;=\; \big\langle \bs f, \calP_r\bs v\big\rangle_{\times,\Gamma_r}
  \qquad \forall\, \bs v \in X.
\end{equation}
Define the pulled-back operator and form
\begin{equation}
  \label{eq:pulledback}
  \widetilde\calT_r := \calP_r' \circ \calT_{\Gamma_r} \circ \calP_r
  \;\in\; \calL(X, X'),
\end{equation}
and
\begin{equation}
  \widetilde{\mathsf a}_r(\bs u,\bs v)
    := \big\langle \widetilde\calT_r \bs u, \bs v
       \big\rangle_{\times,\Gnom}
    = \big\langle \calT_{\Gamma_r}(\calP_r \bs u), \calP_r\bs v
       \big\rangle_{\times,\Gamma_r},
\end{equation}
the second equality above being
exactly~\eqref{eq:piola-dual} applied to $\bs f = \calT_{\Gamma_r}(\calP_r\bs
u)$. The two transports are compatible with the rotated Maxwell duality:
combining the contravariant Piola transformation with Nanson's formula and
the surface change of variables gives $\langle \calP_r\bs u, \calP_r\bs
v\rangle_{\times,\Gamma_r} = \langle \bs u,\bs v\rangle_{\times,\Gnom}$.
Hence $\calP_r'$ is a bounded isomorphism, uniformly for $r$ small, and
$\widetilde\calT_r \in \calL(X,X')$ as stated. This is the pullback
of~\cite{EscapilInchauspeJerezHanckes2026}, where the cancellation is shown to
be exact: no unit normal, no inverse surface metric and no surface Jacobian
survives in the pulled-back form, only the deformed kernel and the first
tangential differential~\cite[Prop.~4.1]{EscapilInchauspeJerezHanckes2026}.
Lemma~\ref{lem:nxy-exact} is the discrete counterpart of that cancellation for
the RWG--BC pairing.

By construction $\widetilde{\mathsf a}_0 = \mathsf a_0$, and we write
$\mathsf a_{\bs y} := \widetilde{\mathsf a}_\varepsilon$. The whole family $\{\widetilde\calT_r\}_r$ acts between the
\emph{fixed} spaces $X$ and $X'$ on the \emph{fixed} boundary $\Gnom$.
Because the Piola transform of a nominal-mesh RWG (resp.\ BC) basis
function associated with a connectivity-preserving mesh deformation is
the corresponding deformed-mesh basis function, the Galerkin matrix of
$\mathsf a_{\bs y}$ on $X_h \times Y_h$, expressed in the DOF-aligned
nominal basis of Section~\ref{sec:implementation}, is the
perturbed-geometry EFIE matrix used in practice (up to the
piecewise-affine geometry approximation discussed in
Remark~\ref{rmk:variationalcrime}).

The reuse argument requires one geometric hypothesis.

\begin{sgassumption}{G}{geometric operator continuity}
  \label{ass:geocont}
  There exist $\varepsilon_0 > 0$ and $C_{\mathrm{geo}} > 0$ such that
  the pulled-back EFIE operators of~\eqref{eq:pulledback} satisfy
  \begin{equation}
    \label{eq:geocont}
    \big\| \widetilde\calT_\varepsilon - \widetilde\calT_0
    \big\|_{\calL(X,X')} \;\le\; C_{\mathrm{geo}}\,\varepsilon,
    \qquad 0 \le \varepsilon \le \varepsilon_0,
  \end{equation}
  uniformly with respect to $\bs y \in U$.
\end{sgassumption}

Assumption~\ref{ass:geocont} is the only geometry-dependent input to
Theorem~\ref{thm:main}; boundary regularity plays no role there.
It enters only in the verification that follows.
Reference~\cite{EscapilInchauspeJerezHanckes2026} establishes
operator-valued holomorphy of the Piola-pulled-back Maxwell EFIE on a fixed
$C^{1,1}$ reference boundary, in the trace spaces~\eqref{eq:tracespace} and
under the pullback~\eqref{eq:pulledback}. We do not use the holomorphy itself,
only the Lipschitz estimate it yields through a Cauchy estimate along complex
segments, \cite[Cor.~6.1]{EscapilInchauspeJerezHanckes2026}: writing
$\widetilde\calT_{\bs y}$ for the pullback~\eqref{eq:pulledback} at parameter
$\bs y$ and $b_j$ for the $W^{2,\infty}(\Gnom)$-norm of the $j$th deformation
field,
\begin{equation}
  \label{eq:corA}
  \big\|\widetilde\calT_{\bs y} - \widetilde\calT_{\tilde{\bs y}}
  \big\|_{\calL(X,X')} \;\le\; C_L \sum_{j\in\N} b_j\,|y_j - \tilde y_j|,
  \qquad \bs y, \tilde{\bs y} \in U,
\end{equation}
with $C_L$ independent of the parameters. Assumption~\ref{ass:geocont} is the
case $\tilde{\bs y} = \bs 0$.

\begin{lemma}[verification of Assumption~\ref{ass:geocont} for $C^{1,1}$
  geometries]
  \label{lem:shapepert}
In addition to the setting of Section~\ref{sec:shapeperts}, let $\Gnom$
  be of class $C^{1,1}$ and let the deformation fields $\{\bs\psi_j\}_{j\in\N}
  \subset C^{1,1}(\R^3;\R^3)$ satisfy the summability hypothesis of
  Section~\ref{sec:shapeperts}, $C_0 := \sum_j\|\bs\psi_j\|_{C^{1,1}} <
  \infty$, and let the $W^{2,\infty}(\Gnom)$-norms of their restrictions
  to $\Gnom$ lie in $\ell^p(\N)$ for some $0 < p < 1$. Then
  Assumption~\ref{ass:geocont} holds: there exist
  $\varepsilon_0 \in (0,1/(2C_0))$ and a bounded $C_{\mathrm{geo}}$,
  depending on $\kappa$, on $\Gnom$, on $\varepsilon_0$ and on the
  amplitude sequence $(\|\bs\psi_j\|_{W^{2,\infty}(\Gnom)})_{j}$ itself ---
  which enters the admissibility budget
  of~\cite{EscapilInchauspeJerezHanckes2026} and not only through its sum
  $C_0$ --- both independent of $\bs y
  \in U$ and of any discretization, such that~\eqref{eq:geocont} holds.
\end{lemma}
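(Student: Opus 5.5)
The plan is to derive the lemma directly from the Lipschitz estimate \eqref{eq:corA} of \cite[Cor.~6.1]{EscapilInchauspeJerezHanckes2026}, specialized to $\tilde{\bs y} = \bs 0$. The one point needing care is that \eqref{eq:corA} is stated in the parameter variables $\bs y$ with deformation amplitudes $b_j$, while Assumption~\ref{ass:geocont} is stated in the amplitude $\varepsilon$. I will therefore rescale, absorbing $\varepsilon$ into the deformation fields: set $\bs\psi_j^{\varepsilon} := \varepsilon\bs\psi_j$, so that $\bs\varphi_{\bs y} = \mathrm{Id} + \sum_j y_j \bs\psi^\varepsilon_j$ and $b_j^\varepsilon = \varepsilon b_j$ with $b_j := \|\bs\psi_j\|_{W^{2,\infty}(\Gnom)}$. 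Alternatively, I can keep the fields fixed and treat the scaled parameter $\varepsilon\bs y$ as an element of $U$ whenever $\varepsilon \le 1$. The second viewpoint is cleaner, because then $\widetilde\calT_\varepsilon$ at direction $\bs V=\sum_j y_j\bs\psi_j$ is literally $\widetilde\calT_{\varepsilon\bs y}$ in the parametrization of \cite{EscapilInchauspeJerezHanckes2026}.

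\textbf{Step 1 (checking the hypotheses of \cite{EscapilInchauspeJerezHanckes2026}).} Under the stated assumptions, $\Gnom$ is $C^{1,1}$, the $\bs\psi_j$ are $C^{1,1}$ with $C_0<\infty$, and $(b_j)\in\ell^p$ with $p<1$; these are exactly the hypotheses of that holomorphy result. It remains to match its admissibility budget. I will choose $\varepsilon_0 < \min\{1, 1/(2C_0)\}$ small enough that every $\varepsilon\bs y$ with $|\varepsilon|\le\varepsilon_0$ and $\bs y\in U$ lies in the real parameter set on which \eqref{eq:corA} holds. In particular, the complex neighbourhood used in the Cauchy estimate must be admissible, which in practice means $\sum_j b_j\,\rho_j\varepsilon_0$ stays below the bi-Lipschitz/invertibility threshold for suitable polyradii $\rho_j>1$. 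This is exactly why $C_{\mathrm{geo}}$ depends on the whole sequence $(b_j)$ and not only on $C_0$.

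\textbf{Step 2 (the bound).} Apply \eqref{eq:corA} with parameters $\varepsilon\bs y$ and $\bs 0$:
\[
\big\|\widetilde\calT_\varepsilon-\widetilde\calT_0\big\|_{\calL(X,X')} \le C_L\sum_j b_j\,\varepsilon|y_j| \le C_L\Big(\sum_j b_j\Big)\varepsilon .
\]
Here $\sum_j b_j \le \|(b_j)\|_{\ell^1}\le\|(b_j)\|_{\ell^p}<\infty$, since $p<1$. Setting $C_{\mathrm{geo}} := C_L\|(b_j)\|_{\ell^1}$ gives \eqref{eq:geocont} uniformly in $\bs y\in U$. The constant is independent of any discretization, because only continuous operators on $X$ are involved. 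It depends on $\kappa$, $\Gnom$ and $\varepsilon_0$ through $C_L$.

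\textbf{Main obstacle.} The hard part is Step 1: making sure the constant $C_L$ of \cite[Cor.~6.1]{EscapilInchauspeJerezHanckes2026} is uniform on the scaled parameter set, and that the pullback \eqref{eq:pulledback} coincides with theirs, including the Piola transport and the rotated pairing convention. On the first point, I would inspect how the Cauchy estimate's radius depends on the distance of the segment $[\bs 0,\varepsilon\bs y]$ to the boundary of the holomorphy domain. Shrinking $\varepsilon_0$ keeps that distance bounded below uniformly in $\bs y$, which yields a uniform $C_L$. On the second point, the identification of the two pullbacks follows from the Nanson/Piola compatibility $\langle\calP_r\bs u,\calP_r\bs v\rangle_{\times,\Gamma_r}=\langle\bs u,\bs v\rangle_{\times,\Gnom}$ noted above.
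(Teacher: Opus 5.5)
You have the right overall plan: import \eqref{eq:corA} at $\tilde{\bs y}=\bs 0$ and turn a parameter bound into an amplitude bound. This is what the paper does. But the route you settle on, your ``second viewpoint'' (keep the fields $\bs\psi_j$ and evaluate at $\varepsilon\bs y\in U$), has a genuine gap, exactly where you flag the main obstacle.

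Cor.~6.1 of \cite{EscapilInchauspeJerezHanckes2026} is not a statement about ``the real parameter set on which it holds''. Its hypotheses (Assump.~2.1--2.2 there) are conditions on the parametric family over all of $U$, and $C_L$ is a constant attached to such an admissible family. With unscaled fields the relevant family is $\mathrm{Id}+\sum_j y_j\bs\psi_j$, $\bs y\in U$, and the lemma only gives $C_0<\infty$. Nothing prevents this family from degenerating: take $\bs\psi_1=-\mathrm{Id}$ near $\Gnom$ and $y_1=1$, which collapses $\Gnom$ to a point. The corollary then cannot be invoked at all, not even on $\varepsilon_0U$. Shrinking $\varepsilon_0$ restricts where you evaluate, but it does not change the family whose admissibility the import requires. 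Your proposed cure, re-examining the Cauchy radius along $[\bs 0,\varepsilon\bs y]$, means reopening the companion proof rather than citing it.

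The missing step is to put $\varepsilon_0$ into the fields rather than into the parameter. The paper applies the companion framework to the \emph{fixed} family with fields $\varepsilon_0\bs\psi_j|_{\Gnom}$:
\begin{itemize}
\item Admissibility on all of $U$ comes from $\varepsilon_0C_0<1/2$. This is its Step~1, the Neumann-series bound that also makes $\calP_r$, $\calP_r^{-1}$ and $\calP_r'$ uniformly bounded.
\item The amplitudes $b_j=\varepsilon_0\|\bs\psi_j|_{\Gnom}\|_{W^{2,\infty}(\Gnom)}$ lie in $\ell^p$, with $\sum_jb_j\le c\,\varepsilon_0C_0$.
\item The map at amplitude $\varepsilon\le\varepsilon_0$ is that family evaluated at $(\varepsilon/\varepsilon_0)\bs y\in U$.
\end{itemize}
Then \eqref{eq:corA} gives $\|\widetilde\calT_\varepsilon-\widetilde\calT_0\|_{\calL(X,X')}\le C_L(\varepsilon/\varepsilon_0)\sum_jb_j\le c\,C_LC_0\,\varepsilon$. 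A single $C_L$ works because the family does not depend on $\varepsilon$.

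This is also why your first viewpoint, scaling by $\varepsilon$ itself, is no substitute. It produces an $\varepsilon$-dependent family, hence an $\varepsilon$-dependent $C_L$, whose uniformity you would again have to extract from the companion's proof. Once you rescale by $\varepsilon_0$, your final bound $C_L\,\varepsilon\sum_j\|\bs\psi_j\|_{W^{2,\infty}(\Gnom)}$ is correct, with $C_L$ the constant of the $\varepsilon_0$-scaled family. The rest of your argument then goes through: the estimate $\sum_j b_j\le\|(b_j)\|_{\ell^p}$, independence from the discretization, and the dependence of $C_{\mathrm{geo}}$ on the whole amplitude sequence.
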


\begin{proof}
 \emph{Step 1 (uniformity of the transport).} Let $\varepsilon_0 C_0 <
  1/2$. For $r \in [0,\varepsilon_0]$ the map $\bs\varphi_r = \mathrm{Id} + r\bs
  V$ satisfies $\|\bs\varphi_r - \mathrm{Id}\|_{C^{1,1}} \le r\|\bs
  V\|_{C^{1,1}} \le rC_0 < 1/2$ for every $\bs y \in U$, so $d\bs\varphi_r$ is
  invertible with a Neumann-series bound and $\bs\varphi_r$ is a $C^{1,1}$
  diffeomorphism onto its image. The associated transports $\calP_r$,
  $\calP_r^{-1}$ and $\calP_r'$ of~\eqref{eq:piola-dual} are then bounded
  uniformly in $r \in [0,\varepsilon_0]$ and $\bs y \in U$, and the
  pulled-back operators~\eqref{eq:pulledback} form a family in the
  \emph{fixed} space $\calL(X,X')$ with $\widetilde\calT_0 = \calT_{\Gnom}$.

  \emph{Step 2 (import).} Fix $\varepsilon \le \varepsilon_0$ and apply
  the parametric framework of~\cite{EscapilInchauspeJerezHanckes2026} to the
  \emph{fixed} family with deformation fields $\varepsilon_0\,\bs\psi_j
  |_{\Gnom} \in W^{2,\infty}(\Gnom;\R^3)$, its own fields up to that scaling.
  Its amplitudes
  $b_j = \varepsilon_0\|\bs\psi_j|_{\Gnom}\|_{W^{2,\infty}(\Gnom)} \le
  c\,\varepsilon_0\|\bs\psi_j\|_{C^{1,1}}$ lie in $\ell^p(\N)$ by hypothesis
  and satisfy $\sum_j b_j \le c\,\varepsilon_0 C_0$, while Step~1 supplies the uniform
  bi-Lipschitz bound; together with $\Gnom \in C^{1,1}$ these
  are~\cite[Assump.~2.1--2.2]{EscapilInchauspeJerezHanckes2026}. The map
  $\bs\varphi_{\bs y}$ of~\eqref{eq:pertmap} at amplitude $\varepsilon$ is that
  family evaluated at the parameter $(\varepsilon/\varepsilon_0)\bs y \in U$,
  so~\eqref{eq:corA} with $\tilde{\bs y} = \bs 0$ gives, uniformly in $\bs y
  \in U$,
  \begin{equation}
    \label{eq:derivbound}
    \big\|\widetilde\calT_\varepsilon - \widetilde\calT_0
      \big\|_{\calL(X,X')}
    \;\le\; C_L\,\frac{\varepsilon}{\varepsilon_0} \sum_{j\in\N} b_j
    \;\le\; c\,C_L C_0\,\varepsilon,
  \end{equation}
  which is~\eqref{eq:geocont} with $C_{\mathrm{geo}} := c\,C_L C_0$.
\end{proof}

\begin{corollary}[discrete $(h,\nu)$-perturbation]
  \label{cor:hnu}
  Let Assumption~\ref{ass:geocont} hold. Then, for all $0 \le
  \varepsilon \le \varepsilon_0$,
  \begin{equation}
    \label{eq:shapepertbound}
    \left| \mathsf a_0(\bs u_h,\bs v_h) - \mathsf a_{\bs y}(\bs u_h,\bs
    v_h) \right| \le C_{\mathrm{geo}}\,\varepsilon\, \|\bs u_h\|_X
    \|\bs v_h\|_Y,
    \qquad \forall\, \bs u_h \in X_h,\ \bs v_h \in Y_h,
  \end{equation}
  uniformly in $\bs y \in U$ and with a constant independent of $h$.
  Equivalently, writing $\gamma_{\mathsf a_0,h}$ for the discrete
  nominal inf-sup constant at mesh level $h$, $\mathsf a_{\bs y} \in
  \Phi_{h,\nu_h}(\mathsf a_0)$ (Definition~\ref{def:perturbation})
  with the mesh-specific parameter
  \begin{equation}
    \label{eq:nu-def}
    \nu_h(\varepsilon) := \frac{C_{\mathrm{geo}}\,\varepsilon}{
      \gamma_{\mathsf a_0,h}},
  \end{equation}
  which lies in $[0,1)$ for all $\varepsilon < \gamma_{\mathsf
  a_0,h}/C_{\mathrm{geo}}$. Under Assumption~\ref{ass:stability},
  setting
  \begin{equation}
    \label{eq:nubar-def}
    \bar\nu(\varepsilon) := \frac{C_{\mathrm{geo}}\,\varepsilon}{
      \underline\gamma_{\mathsf a_0}} \;\ge\; \nu_h(\varepsilon),
  \end{equation}
  one has $\mathsf a_{\bs y} \in \Phi_{h,\nu_h}(\mathsf a_0) \subset
  \Phi_{h,\bar\nu}(\mathsf a_0)$ for every $h \le h_0$, with
  $\bar\nu(\varepsilon)$ independent of $h$; admissibility
  ($\bar\nu < 1$) holds simultaneously at every mesh level once
  $\varepsilon < \underline\gamma_{\mathsf a_0}/C_{\mathrm{geo}}$.
\end{corollary}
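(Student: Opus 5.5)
The plan is a direct restriction-and-rescaling argument: Assumption~\ref{ass:geocont} already gives an $h$-independent operator-norm bound on the fixed space $\calL(X,X')$, and conformity $X_h,Y_h\subset X$ transfers it to discrete arguments verbatim. No inverse estimate or finite-dimensionality is needed, so $h$ never enters.

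\emph{Step 1 (continuous-to-discrete transfer).} Fix $\bs y\in U$ and $0\le\varepsilon\le\varepsilon_0$. Since $Y=X$ and both forms are defined through the same rotated pairing on $\Gnom$,
\begin{equation*}
  \mathsf a_0(\bs u,\bs v)-\mathsf a_{\bs y}(\bs u,\bs v)
  =\big\langle(\widetilde\calT_0-\widetilde\calT_\varepsilon)\bs u,\bs v\big\rangle_{\times,\Gnom},
  \qquad \bs u,\bs v\in X.
\end{equation*}
The identification $X'\cong\bs H^{-1/2}(\mathbf{curl}_\Gamma,\Gnom)$ under this pairing is isometric once $X'$ carries the dual norm; that is the normalization I will fix. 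Hence $|\langle \bs f,\bs v\rangle_{\times,\Gnom}|\le\|\bs f\|_{X'}\|\bs v\|_X$. Combining this with \eqref{eq:geocont} gives \eqref{eq:shapepertbound} for all $\bs u,\bs v\in X$, in particular for all $\bs u_h\in X_h\subset X$ and $\bs v_h\in Y_h\subset X$. The constant $C_{\mathrm{geo}}$ is uniform in $\bs y$ by Assumption~\ref{ass:geocont} and carries no $h$.

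\emph{Step 2 (rescaling to Definition~\ref{def:perturbation}).} Divide \eqref{eq:shapepertbound} by the discrete nominal inf-sup constant $\gamma_{\mathsf a_0,h}>0$. This yields exactly the defining inequality of $\Phi_{h,\nu_h}(\mathsf a_0)$ with $\nu_h$ as in \eqref{eq:nu-def}. The condition $\nu_h<1$ is equivalent to $\varepsilon<\gamma_{\mathsf a_0,h}/C_{\mathrm{geo}}$.

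\emph{Step 3 (uniform parameter).} Under Assumption~\ref{ass:stability}, $\gamma_{\mathsf a_0,h}\ge\underline\gamma_{\mathsf a_0}$ for all $h\le h_0$, so $\nu_h(\varepsilon)\le\bar\nu(\varepsilon)$. The inclusion $\Phi_{h,\nu}\subset\Phi_{h,\nu'}$ for $\nu\le\nu'$ is immediate from the definition, since the right-hand side grows with $\nu$. The threshold $\varepsilon<\underline\gamma_{\mathsf a_0}/C_{\mathrm{geo}}$ makes $\bar\nu<1$, and hence every $\nu_h<1$, simultaneously at every mesh level.

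The only point needing care, and thus the main obstacle, is Step~1's pairing bound. It requires that the norm placed on $X'$ in Assumption~\ref{ass:geocont} be the dual norm induced by $\langle\cdot,\cdot\rangle_{\times,\Gnom}$. If instead $X'$ carries the intrinsic $\bs H^{-1/2}(\mathbf{curl}_\Gamma)$ norm, the estimate holds up to a fixed $\Gnom$-dependent equivalence constant. I would absorb that constant into $C_{\mathrm{geo}}$; it is still independent of $h$ and $\bs y$, so the statement is unaffected.
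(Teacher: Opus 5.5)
Your proposal is correct and follows the paper's proof essentially step for step. Both arguments write the form difference as $\langle(\widetilde\calT_\varepsilon-\widetilde\calT_0)\bs u_h,\bs v_h\rangle_{\times,\Gnom}$, bound it through Assumption~\ref{ass:geocont} using only the conformity $X_h,Y_h\subset X$, divide by $\gamma_{\mathsf a_0,h}$, and use monotonicity in $\nu$. Your remark on the dual-norm normalization of $X'$ is a harmless extra precision that the paper leaves implicit in its canonical identification of $X'$ under the rotated pairing.
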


\begin{proof}
  For $\bs u_h \in X_h \subset X$ and $\bs v_h \in Y_h \subset X$
  (conformity, Section~\ref{sec:shapebound}),
  \begin{equation*}
    |\mathsf a_{\bs y}(\bs u_h,\bs v_h) - \mathsf a_0(\bs u_h,\bs v_h)|
    = \big|\big\langle (\widetilde\calT_\varepsilon -
      \widetilde\calT_0)\bs u_h, \bs v_h\big\rangle_{\times,\Gnom}\big|
    \le C_{\mathrm{geo}}\varepsilon\, \|\bs u_h\|_X\|\bs v_h\|_X
  \end{equation*}
  by~\eqref{eq:geocont}. The constant is independent of $h$ because
  no property of the discrete spaces is used other than containment in
  $X$. Dividing by the discrete inf-sup constant $\gamma_{\mathsf
  a_0,h}$ of the nominal form yields the $(h,\nu_h)$-perturbation
  statement; the inclusion $\Phi_{h,\nu_h} \subset \Phi_{h,\bar\nu}$
  is immediate from $\nu_h \le \bar\nu$ in
  Definition~\ref{def:perturbation}.
\end{proof}

\subsection{Main theorem}
We now adapt Theorems~\ref{thm:ejh-op}--\ref{thm:ejh-gmres} to the
frozen-preconditioner strategy: the preconditioner ingredients $\bs
T_{YY,0}, \bs N_{XY,0}$ are assembled \emph{exactly} at the nominal
geometry $\Gnom$ (no numerical perturbation, $\mu = 0$), and reused,
via the DOF-alignment permutation of
Section~\ref{sec:implementation}, to
precondition the EFIE assembled at the perturbed geometry $\Gpert$
(perturbation parameters $\nu_h(\varepsilon)$ and
$\bar\nu(\varepsilon)$ from Corollary~\ref{cor:hnu}).

\begin{theorem}[Robustness of the frozen Calder\'on preconditioner]
  \label{thm:main}
  Let
  \begin{equation*}
    \bs P_0 := \bs N_{XY,0}^{-\top}\,\bs T_{YY,0}\,\bs N_{XY,0}^{-1}
  \end{equation*}
  be the Calder\'on preconditioner assembled exactly at the nominal
  geometry $\Gnom$, and let $\bs A_{\varepsilon,h}$ denote the EFIE
  matrix assembled at a $\varepsilon$-amplitude shape perturbation
  $\Gpert$, expressed in the DOF-aligned nominal basis. 
  
  Let
  Assumption~\ref{ass:stability} ($h$-uniform discrete stability) and
  Assumption~\ref{ass:geocont} (geometric operator continuity) hold,
  and let $0 < h \le h_0$ and $0 \le \varepsilon \le \varepsilon_0$ with
  $\varepsilon < \underline\gamma_{\mathsf a_0}/C_{\mathrm{geo}}$, so that
  $\nu_h(\varepsilon) \le \bar\nu(\varepsilon) \in [0,1)$ by
  Corollary~\ref{cor:hnu}, uniformly in $h$. Then the spectral
  condition number of the frozen-preconditioned perturbed system
  satisfies
  \begin{equation}
    \label{eq:main-cond}
    \kappa_S(\bs P_0 \bs A_{\varepsilon,h}) \;\le\; K_\star \cdot
      \frac{1+\nu_h(\varepsilon)}{1-\nu_h(\varepsilon)}
    \;\le\; \bar K_\star \cdot
      \frac{1+\bar\nu(\varepsilon)}{1-\bar\nu(\varepsilon)},
    \qquad
    \bar K_\star := (\overline M/\underline\gamma)^4,
  \end{equation}
  where the first bound is the sharper mesh-specific estimate and the
  second is independent of $h \le h_0$. If, in addition,
  the coercivity-type
  hypothesis~\cite[Assumption~4]{EscapilInchauspeJerezHanckes2021}
  holds for the resulting preconditioned system, then the weighted
  GMRES$(m)$ residual factor after $k$ steps satisfies
  \begin{equation}
    \label{eq:main-gmres}
    \Theta_k^{(m)} \le \left(1 - \frac{1-\bar\nu(\varepsilon)}{\bar
      K_\star (1+\bar\nu(\varepsilon))}\right)^{1/2}
    \xrightarrow[\varepsilon \to 0]{} \left(1 -
      \frac1{\bar K_\star}\right)^{1/2},
  \end{equation}
  recovering, in the limit of no perturbation, the
  unperturbed-geometry GMRES bound obtained from
  Theorem~\ref{thm:ejh-gmres} at $\mu=\nu=0$. In particular,
  Lemma~\ref{lem:shapepert} verifies Assumption~\ref{ass:geocont} for
  the $C^{1,1}$ Maxwell geometries considered in this paper.
\end{theorem}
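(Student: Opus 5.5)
The proof is a direct instantiation of Theorem~\ref{thm:ejh-op} with $\mu=0$ and $\nu=\nu_h(\varepsilon)$. It then uses monotonicity of the map $t\mapsto(1+t)/(1-t)$ to pass to the $h$-uniform bound.

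First, set up the abstract OP-PG data of Section~\ref{sec:oppg}: $X=Y=V=W=\bs H^{-1/2}(\dive_\Gamma,\Gnom)$, $\mathsf a=\mathsf a_0$, $\mathsf c$ the nominal EFIE form on $Y_h$, and $\mathsf n,\mathsf m$ the RWG--BC duality pairing and its transpose on $\Gnom$. By construction $\bs P_0=\bs N_{XY,0}^{-\top}\bs T_{YY,0}\bs N_{XY,0}^{-1}$ is exactly the matrix of $\mathrm M^{-1}\mathrm C\mathrm N^{-1}$ with $\mathsf c_\mu=\mathsf c$. Hence $\mu=0$ and the factor $(1+\mu)/(1-\mu)=1$.

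Second, identify $\bs A_{\varepsilon,h}$ with the Galerkin matrix of $\mathsf a_{\bs y}=\widetilde{\mathsf a}_\varepsilon$ on $X_h\times Y_h$. This rests on the observation from Section~\ref{sec:shapebound}: the Piola transport of nominal RWG/BC basis functions under a connectivity-preserving deformation gives the deformed-mesh basis functions, and the DOF alignment is the permutation of Section~\ref{sec:implementation}. The pairing transported via Lemma~\ref{lem:nxy-exact} is therefore unchanged, so only $\mathsf a$ is perturbed. By Corollary~\ref{cor:hnu}, $\mathsf a_{\bs y}\in\Phi_{h,\nu_h(\varepsilon)}(\mathsf a_0)$ with $\nu_h(\varepsilon)<1$ under the hypothesis $\varepsilon<\underline\gamma_{\mathsf a_0}/C_{\mathrm{geo}}$. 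Theorem~\ref{thm:ejh-op} then gives the first inequality in \eqref{eq:main-cond}.

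For the second inequality, Assumption~\ref{ass:stability} gives $K_\star\le(\overline M/\underline\gamma)^4=\bar K_\star$. Corollary~\ref{cor:hnu} gives $\nu_h\le\bar\nu<1$, and $t\mapsto(1+t)/(1-t)$ is increasing on $[0,1)$. Neither quantity depends on $h\le h_0$. The GMRES bound \eqref{eq:main-gmres} follows from Theorem~\ref{thm:ejh-gmres} applied with $K_{\star,0,\nu_h}\le\bar K_\star(1+\bar\nu)/(1-\bar\nu)$, using that $(1-1/K)^{1/2}$ is increasing in $K$. The limit as $\varepsilon\to0$ follows from $\bar\nu(\varepsilon)\to0$. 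The final sentence of the theorem is simply Lemma~\ref{lem:shapepert}.

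The main obstacle is the second step, the rigorous identification of the practical perturbed matrix with the Galerkin matrix of the pulled-back form in the nominal basis. In particular, one must check that the duality pairing matrix, and hence the $\mathsf m,\mathsf n$ constants, are genuinely geometry-invariant, so that all the perturbation is absorbed into $\mathsf a$. For this I would invoke the exact cancellation $\langle\calP_r\bs u,\calP_r\bs v\rangle_{\times,\Gamma_r}=\langle\bs u,\bs v\rangle_{\times,\Gnom}$ and its discrete counterpart, Lemma~\ref{lem:nxy-exact}. The polyhedral-approximation gap is deferred to Remark~\ref{rmk:variationalcrime}, consistent with the theorem's conforming setting.
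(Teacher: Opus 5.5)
Your proof is correct and follows the paper's argument. You apply Theorem~\ref{thm:ejh-op} at $\mu=0$, $\nu=\nu_h(\varepsilon)$, then use $K_\star\le\bar K_\star$ and monotonicity of $t\mapsto(1+t)/(1-t)$, and finish with Theorem~\ref{thm:ejh-gmres}; where the paper substitutes $\nu=\bar\nu$ directly, your use of $\nu_h$ plus monotonicity of $K\mapsto(1-1/K)^{1/2}$ is equivalent and slightly more explicit about replacing $K_\star$ by $\bar K_\star$. Your ``main obstacle'' is overweighted: $\bs P_0$ is built from $\bs N_{XY,0}$ by definition and the bi-parametric theorem keeps $\mathsf m,\mathsf n$ at their nominal values, so Lemma~\ref{lem:nxy-exact} is not needed for the bound (it only explains why the permuted nominal pairing coincides with the freshly assembled one), and the paper's proof does not invoke it.
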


\begin{proof}
  Apply Theorem~\ref{thm:ejh-op} with $\mu = 0$ (so $\mathsf c_0 =
  \mathsf c$, i.e.\ $\bs T_{YY,0}, \bs N_{XY,0}$ are unperturbed) and
  $\nu = \nu_h(\varepsilon)$ as furnished by Corollary~\ref{cor:hnu}
  for $\mathsf a_{\bs y} \in \Phi_{h,\nu_h(\varepsilon)}(\mathsf
  a_0)$; since $(1+0)/(1-0) = 1$, \eqref{eq:kappastar} collapses to
  the first inequality in~\eqref{eq:main-cond}. The second follows
  from $\nu_h \le \bar\nu$, the monotonicity of $t \mapsto
  (1+t)/(1-t)$, and $K_\star \le \bar K_\star$ under
  Assumption~\ref{ass:stability}; every quantity in it is uniform in
  $h \le h_0$. Under the additional coercivity-type hypothesis,
  substituting $\mu=0,\ \nu=\bar\nu(\varepsilon)$ into
  Theorem~\ref{thm:ejh-gmres} gives~\eqref{eq:main-gmres}.
\end{proof}

\subsection{Scope of the theoretical result}
\label{sec:scope}
 What Theorem~\ref{thm:main} delivers without the additional
coercivity-type hypothesis is the following: a geometric perturbation of amplitude $\varepsilon$ enters as an
$(h,\nu)$-perturbation with $\nu$ proportional to $\varepsilon$, and the
resulting bound~\eqref{eq:main-cond} on $\kappa_S(\bs P_0 \bs
A_{\varepsilon,h})$ holds uniformly in $h \le h_0$ and, by
Corollary~\ref{cor:bochner}, uniformly in the stochastic approximation
dimension as well. That is the behaviour exhibited by
Table~\ref{tab:href}, by Table~\ref{tab:sgexp}, and directly by
Figure~\ref{fig:linearity}.

Three limits bound that statement. First, $\kappa_S$ does not by itself
control Krylov convergence for the non-normal matrices at hand
(Section~\ref{sec:oppg}); the convergence claim is carried
by~\eqref{eq:main-gmres}, which is conditional. Second, that estimate rests on
a coercivity-type hypothesis we do not verify for Maxwell, though
Remark~\ref{rmk:fov} checks it numerically at each discretization used here. Third, Remark~\ref{rmk:variationalcrime} identifies the
gap between the exact-surface operators of the theorem and the
piecewise-affine matrices actually assembled. 
The coercivity-type
hypothesis~\cite[Assumption~4]{EscapilInchauspeJerezHanckes2021} asks that the
$X_h$-field of values of $\bs P_0\bs A_{\varepsilon,h}$ and of its inverse
stay away from the origin. We do not establish it for the Maxwell problem, so
\eqref{eq:main-gmres} is asserted conditionally, and it bounds the
\emph{weighted} residual factor, the Euclidean one carrying an extra
$K_{\Lambda_h}$~\cite[eq.~(77)]{EscapilInchauspeJerezHanckes2021}. The
iteration counts of Section~\ref{sec:numerics} are empirical evidence, not a
verification of either bound.

\begin{remark}[Numerical check on the coercivity-type hypothesis]
  \label{rmk:fov}
  Unproven, the hypothesis is nonetheless checkable at each discretization.
  Since the field of values is convex, $\bs 0 \notin W(\bs M)$ holds exactly
  when some rotation makes the Hermitian part positive definite, that is, when
  the \emph{rotated margin}
  \begin{equation}
    \label{eq:fovmargin}
    m(\bs M) := \max_{\theta\in[0,2\pi)} \lambda_{\min}
      \Bigl(\tfrac12\bigl(e^{\iota\theta}\bs M
      + (e^{\iota\theta}\bs M)^{H}\bigr)\Bigr),
    \quad \bs M := \bs P_0\bs A_{\varepsilon,h}\big/
      \|\bs P_0\bs A_{\varepsilon,h}\|_F,
  \end{equation}
  is strictly positive; the rotation matters, the value at $\theta=0$ being
  $5.0\times10^{-4}$ against $2.8\times10^{-3}$ after rotation on the sphere at
  $\kappa=4$. In every smooth case used here $m>0$, certified by a Cholesky
  factorization of the Hermitian part at the exhibited $\theta$ (sphere,
  $\kappa=2$, $4\,821$ dof: $m = 1.19\times10^{-2}$). On the Fichera geometry
  $m<0$, certified in the opposite direction by Rayleigh quotients whose convex
  hull contains the origin, and~\eqref{eq:main-gmres} does not apply. The sign
  changes at a definite electrical size, $m>0$ for $\kappa a \le 1.45$ and
  $m<0$ for $\kappa a \ge 1.50$ with $a$ the cube's half-width, and appears
  insensitive to mesh refinement, the sign at
  $\kappa a = 1.50$ staying negative at three resolutions between $14$ and
  $28$ elements per wavelength. Section~\ref{sec:fichera} sits just above that
  threshold, at $\kappa a = 2$, and already refrains from invoking
  Theorem~\ref{thm:main} there. What fails is not
  Assumption~\ref{ass:stability}: $\mathrm{cond}(\bs N_{XY})$ lies between
  $2.84$ and $2.99$ on all three geometries. The sign change is instead
  consistent with the failure of $\calK$ to be compact on Lipschitz surfaces
  (Section~\ref{sec:calderon}), and supports working in the
  operator-preconditioning framework throughout. We claim no $h$-uniformity
  for $m$: the $h$-uniform statement of Theorem~\ref{thm:main}
  is~\eqref{eq:main-cond}, which needs no coercivity hypothesis.
\end{remark}

\begin{remark}[Regularity]
  \label{rmk:regularity}
  Theorem~\ref{thm:main} requires only the operator
  estimate~\eqref{eq:geocont}, not shape differentiability and not
  holomorphy. The logical chain behind Lemma~\ref{lem:shapepert} is
  Maxwell EFIE shape holomorphy~\cite{EscapilInchauspeJerezHanckes2026}
implies operator-norm Lipschitz bound~\eqref{eq:geocont}
implies $(h,\nu)$-perturbation (Corollary~\ref{cor:hnu})
implies frozen-Calder\'on robustness (Theorem~\ref{thm:main});
  only its first implication is imported from outside this paper.
That import is not frequency-explicit: every constant
  in~\cite{EscapilInchauspeJerezHanckes2026} depends on $\kappa$ and that
  dependence is not tracked there, so~\eqref{eq:geocont}, and with it
  Theorem~\ref{thm:main}, is a statement at fixed $\kappa$ --- the theoretical
  counterpart of the wavenumber behaviour of
  Section~\ref{sec:kappamismatch}, for which we claim no bound. For
  related theory see the classical electromagnetic shape-differentiability
  results of Costabel and Le
  Lou\"er~\cite{CostabelLeLouer2012a,CostabelLeLouer2012b} and the scalar
  boundary-integral shape holomorphy of D\"olz and
  Henr\'iquez~\cite{DoelzHenriquez2024}. Boundaries that are merely Lipschitz, polyhedra in
  particular, remain outside the verified range of
  Assumption~\ref{ass:geocont}; Section~\ref{sec:fichera} probes that gap
  numerically.
\end{remark}

\begin{remark}[Geometric variational crime]
  \label{rmk:variationalcrime}
  Corollary~\ref{cor:hnu}, and hence Theorem~\ref{thm:main}, applies
  \emph{literally} to conforming discretizations on the exact
  surfaces: the matrices it speaks about are those of the
  $X$-conforming RWG/BC spaces carried by $\Gnom$ and $\Gpert$
  themselves. Our implementation instead involves two polyhedral
  approximation layers: the smooth nominal surface $\Gnom$ is replaced
  by the straight-sided mesh surface $\Gamma_{0,h}$, and the perturbed
  surface $\Gpert$ by the mesh $\Gamma_{\bs y,h}$ obtained by moving
  the \emph{vertices} by $\bs\varphi_{\bs y}$ and re-drawing straight
  triangles (i.e.\ deforming by the piecewise-affine interpolant of
  $\bs\varphi_{\bs y}$). Transferring the continuous estimate to the
  matrices actually assembled would require, in addition, a surface
  lift $\ell_h : \Gamma_{0,h} \to \Gnom$ with associated Piola lifts
  of the RWG/BC spaces, together with consistency estimates for the
  lifted bilinear forms. That is the standard, but nontrivial, lifting
  analysis of geometric variational
  crimes~\cite{SaufferNedelec2011}. We do not develop this lifting
  analysis here and therefore do not claim that the assembled matrices
  satisfy Theorem~\ref{thm:main} verbatim.
  Section~\ref{sec:refinement} is accordingly evidence
  that the robustness predicted on the exact surfaces persists for the
  straight-sided implementation, the frozen preconditioner's iteration
  count being constant in $h$ at fixed perturbation amplitude, rather than a
  verification of Theorem~\ref{thm:main} for the assembled polyhedral
  matrices.
\end{remark}

\begin{remark}[Contrast with wavenumber reuse]
  \label{rmk:kappa-mismatch}
 The preceding argument relates to shape perturbations at a \emph{fixed}
  wavenumber: Assumption~\ref{ass:geocont} makes
  $\widetilde\calT_\varepsilon$ a small perturbation of a fixed operator.
  Changing $\kappa$ instead alters the relative scaling of the EFIE's vector-
  and scalar-potential components, the $-\iota\kappa$ and
  $1/(\iota\kappa)$ weights in~\eqref{eq:efie}, and the present analysis
  yields no $(h,\nu)$-bound in the sense of
  Definition~\ref{def:perturbation} uniform over large frequency ratios. We
  therefore make no corresponding theoretical claim for wavenumber reuse;
  Section~\ref{sec:kappamismatch} examines this case numerically, as a
  contrast to the shape-parametric setting.
\end{remark}

\subsection{Bochner-space and stochastic-Galerkin consequence}
\label{sec:bochner}
The uniformity with respect to $\bs y \in U$ in
Assumption~\ref{ass:geocont} has a direct consequence for uncertainty
quantification, where the parametric solution is sought in Bochner
spaces over the parameter
domain~\cite{AylwinJerezHanckesSchwabZech2020} and the conditioning of the
resulting stochastic systems is a known
bottleneck~\cite{EscapilInchauspeJerezHanckes2024,PowellElman2009}. In
this
subsection we write $\mathsf a_{\varepsilon,\bs y}$ for the perturbed
form $\mathsf a_{\bs y}$ of Section~\ref{sec:shapebound}, making the
amplitude explicit, and $\bs A_h(\varepsilon,\bs y)$ for its
DOF-aligned Galerkin matrix. Equip $U$ with a probability measure\footnote{The UQ interpretation of Section~\ref{sec:shapeperts} is
the product uniform measure.} $\rho$, and define the Bochner spaces and
forms
\begin{equation}
  \mathbb X := L^2_\rho(U;X), \quad \mathbb Y := L^2_\rho(U;Y),
  \qquad
  \begin{aligned}
  \mathsf A_\varepsilon(u,v) &:= \int_U \mathsf a_{\varepsilon,\bs y}
    \big(u(\bs y), v(\bs y)\big)\, d\rho(\bs y),\\
  \mathsf A_0(u,v) &:= \int_U \mathsf a_0
    \big(u(\bs y), v(\bs y)\big)\, d\rho(\bs y).
  \end{aligned}
\end{equation}
Corollary~\ref{cor:hnu} and Cauchy--Schwarz in $L^2_\rho(U)$ give
\begin{equation}
  \label{eq:bochner-form}
  \big| \mathsf A_\varepsilon(u,v) - \mathsf A_0(u,v) \big|
  \;\le\; C_{\mathrm{geo}}\,\varepsilon\,
  \|u\|_{\mathbb X}\, \|v\|_{\mathbb Y}.
\end{equation}

For the intrusive (stochastic Galerkin) discretization, let $S_p
\subset L^2_\rho(U)$ be any finite-dimensional stochastic
approximation space, a polynomial-chaos space for instance, and
tensorize \emph{all four} spaces of the OP-PG framework
(Section~\ref{sec:oppg}): alongside $\mathbb X_{h,p} := S_p \otimes
X_h$ and $\mathbb Y_{h,p} := S_p \otimes Y_h$, set $\mathbb V :=
L^2_\rho(U;V)$, $\mathbb W := L^2_\rho(U;W)$, $\mathbb V_{h,p} := S_p
\otimes V_h$, $\mathbb W_{h,p} := S_p \otimes W_h$, and define the
nominal Bochner forms $\mathsf M_0, \mathsf N_0, \mathsf C_0$ from
$\mathsf m, \mathsf n, \mathsf c$ by integration against $\rho$,
exactly as $\mathsf A_0$. In an $L^2_\rho$-\emph{orthonormal} basis
$\{\psi_\alpha\}_{\alpha \in \Lambda_p}$ of $S_p$, the stochastic
Galerkin matrix of $\mathsf A_\varepsilon$ has blocks $[\mathbb
A_{\varepsilon,h,p}]_{\alpha \beta} = \int_U \psi_\alpha(\bs y)
\psi_\beta(\bs y)\, \bs A_h(\varepsilon,\bs y)\, d\rho(\bs y)$,
coupled in the stochastic indices in general. The nominal form and
the pairings, being independent of $\bs y$, lift as tensor products
with the identity, $\mathbb A_{0,h,p} = I_{|\Lambda_p|} \otimes \bs
A_{0,h}$, $\mathbb M_{0,h,p} = I \otimes \bs M_{0,h}$, $\mathbb
N_{0,h,p} = I \otimes \bs N_{0,h}$, $\mathbb C_{0,h,p} = I \otimes
\bs C_{0,h}$, so that
\begin{equation}
  \label{eq:tensorP}
  \mathbb P_{0,h,p} \;=\; I_{|\Lambda_p|} \otimes \bs P_{0,h}:
\end{equation}
the stochastic lift of the frozen nominal Calder\'on preconditioner
is block-diagonal, with repeated copies of the same physical-space
matrix. A single assembly of $\bs T_{YY,0}$ and $\bs N_{XY,0}$ therefore
serves every stochastic mode of the coupled system, so the dominant
dual-mesh cost is paid once for the entire stochastic problem, independently
of $\dim S_p$; Section~\ref{sec:sgexp} measures this. For a non-orthonormal basis the matrix representations
contain the stochastic Gram matrix $G_p$, as $G_p \otimes {}\cdot{}$;
any resulting $\kappa(G_p)$ dependence is a coordinate-basis effect
and disappears after stochastic orthonormalization.

\begin{corollary}[Bochner-space and stochastic-Galerkin robustness]
  \label{cor:bochner}
  Let Assumptions~\ref{ass:stability} and~\ref{ass:geocont} hold,
  and let $0 \le \varepsilon \le \varepsilon_0$ with $\varepsilon <
  \underline\gamma_{\mathsf a_0}/C_{\mathrm{geo}}$.
  \begin{itemize}
  \item[(i)] \textup{(Bochner perturbation estimate)} The
    estimate~\eqref{eq:bochner-form} holds on $\mathbb X \times
    \mathbb Y$.
  \item[(ii)] \textup{(Stochastic-Galerkin consequence)} For every
    $0 < h \le h_0$ and every finite-dimensional $S_p \subset
    L^2_\rho(U)$, the restriction of $\mathsf A_\varepsilon$ to
    $\mathbb X_{h,p} \times \mathbb Y_{h,p}$ satisfies
    Definition~\ref{def:perturbation} relative to $\mathsf A_0$:
    \begin{equation}
      \gamma_{\mathsf A_0}^{-1}\,
      \big| \mathsf A_\varepsilon(u_{h,p},v_{h,p})
        - \mathsf A_0(u_{h,p},v_{h,p}) \big|
      \;\le\; \bar\nu(\varepsilon)\,
      \|u_{h,p}\|_{\mathbb X}\, \|v_{h,p}\|_{\mathbb Y},
    \end{equation}
    with the same $\bar\nu(\varepsilon) =
    C_{\mathrm{geo}}\varepsilon/\underline\gamma_{\mathsf a_0}$
    of~\eqref{eq:nubar-def}, independent of both $h$ and
    $\dim S_p$, and
    \begin{equation}
      \label{eq:bochner-cond}
      \kappa_S\big( \mathbb P_{0,h,p}\, \mathbb A_{\varepsilon,h,p}
      \big) \;\le\; \bar K_\star \cdot
        \frac{1+\bar\nu(\varepsilon)}{1-\bar\nu(\varepsilon)},
    \end{equation}
    with right-hand side uniform in both the spatial and the
    stochastic discretization. If the stochastic-Galerkin
    preconditioned form additionally satisfies the analogue of the
    coercivity-type
    hypothesis~\cite[Assumption~4]{EscapilInchauspeJerezHanckes2021}, with a
    constant uniform in $p$, then the GMRES
    estimate~\eqref{eq:main-gmres} follows with the same
    $h,p$-uniform constants.
  \end{itemize}
  In particular, increasing the stochastic approximation order does
  not deteriorate the bound for the frozen nominal Calder\'on
  preconditioner.
\end{corollary}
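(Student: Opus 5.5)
Part (i) comes straight from Corollary~\ref{cor:hnu}, applied pointwise in $\bs y$. For $u\in\mathbb X$ and $v\in\mathbb Y$, strong measurability places $u(\bs y)\in X$ and $v(\bs y)\in Y$ for $\rho$-a.e.\ $\bs y$. Assumption~\ref{ass:geocont} holds uniformly in $\bs y\in U$, and the proof of Corollary~\ref{cor:hnu} used only conformity, not discreteness. So the same argument gives, for every $\bs y$,
\[
\bigl|\mathsf a_{\varepsilon,\bs y}(u(\bs y),v(\bs y))-\mathsf a_0(u(\bs y),v(\bs y))\bigr|\le C_{\mathrm{geo}}\varepsilon\,\|u(\bs y)\|_X\|v(\bs y)\|_Y .
\]
Integrating against $\rho$ and applying Cauchy--Schwarz in $L^2_\rho(U)$ gives \eqref{eq:bochner-form}. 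The only technical point is that the integrand is measurable. This follows because $\bs y\mapsto\widetilde\calT_{\bs y}$ is continuous, being Lipschitz by \eqref{eq:corA}, hence Bochner-measurable.

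For (ii), the plan is to reduce everything to Theorem~\ref{thm:ejh-op}, now posed on the tensorized quadruple $(\mathbb X,\mathbb Y,\mathbb V,\mathbb W)$ with discrete spaces $S_p\otimes(\cdot)_h$. The step I expect to be the main obstacle is checking that the nominal Bochner forms $\mathsf A_0,\mathsf M_0,\mathsf N_0,\mathsf C_0$ satisfy Assumption~\ref{ass:stability} with the \emph{same} constants $\underline\gamma,\overline M$, independently of $p$. I would do this through the $L^2_\rho$-orthonormal basis $\{\psi_\alpha\}$. Writing $u_{h,p}=\sum_\alpha\psi_\alpha u_\alpha$ gives $\|u_{h,p}\|_{\mathbb X}^2=\sum_\alpha\|u_\alpha\|_X^2$, and the nominal form decouples as $\mathsf A_0(u_{h,p},v_{h,p})=\sum_\alpha\mathsf a_0(u_\alpha,v_\alpha)$.

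Continuity then follows from Cauchy--Schwarz over $\alpha$. For the inf-sup bound, for each $\alpha$ I pick $v_\alpha\in Y_h$ with $\|v_\alpha\|_Y=\|u_\alpha\|_X$ and $\mathrm{Re}\,\mathsf a_0(u_\alpha,v_\alpha)\ge\gamma_{\mathsf a_0,h}\|u_\alpha\|_X^2$; this is possible after a phase rotation, since the sup is attained in finite dimensions. Summing over $\alpha$ gives $\gamma_{\mathsf A_0}\ge\gamma_{\mathsf a_0,h}\ge\underline\gamma$. The identical argument applies to $\mathsf m,\mathsf n,\mathsf c$, consistent with the block structure in \eqref{eq:tensorP}. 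Hence $K_\star$ for the lifted system is at most $\bar K_\star$, uniformly in $h$ and $p$.

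With these constants in hand, I restrict (i) to $\mathbb X_{h,p}\times\mathbb Y_{h,p}$ and divide by $\gamma_{\mathsf A_0}\ge\underline\gamma_{\mathsf a_0}$. This gives the displayed $(h,\nu)$-perturbation with $\bar\nu(\varepsilon)$, which is $<1$ by the hypothesis on $\varepsilon$. Applying Theorem~\ref{thm:ejh-op} with $\mu=0$ (the lifted preconditioner $\mathbb P_{0,h,p}$ is exactly nominal) and $\nu=\bar\nu$ yields \eqref{eq:bochner-cond}, using monotonicity of $t\mapsto(1+t)/(1-t)$ exactly as in the proof of Theorem~\ref{thm:main}. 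Under the stated $p$-uniform coercivity-type hypothesis, Theorem~\ref{thm:ejh-gmres} gives the GMRES estimate in the same way. A short closing remark should address non-orthonormal bases: they only change coordinates by $G_p^{1/2}$, which is a similarity transformation and leaves $\kappa_S$ unchanged.
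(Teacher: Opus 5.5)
Your proposal is correct and follows the paper's proof step for step. For (i) you use the continuous estimate pointwise in $\bs y$ and then Cauchy--Schwarz in $L^2_\rho(U)$; for (ii) you use the orthonormal-basis decoupling $\mathsf A_0(u,v)=\sum_\alpha \mathsf a_0(u_\alpha,v_\alpha)$ to carry every nominal constant to the tensorized spaces, then apply Theorem~\ref{thm:ejh-op} at $\mu=0$. There are two local differences. First, you obtain $\gamma_{\mathsf A_0}\ge\gamma_{\mathsf a_0,h}$ by summing phase-rotated modewise maximizers, whereas the paper uses bijectivity of $\bs A_{0,h}$, $\gamma_{\mathsf a_0,h}=\|\bs A_{0,h}^{-1}\|^{-1}$ and the cross-norm identity $\|I\otimes\bs A_{0,h}^{-1}\|=\|\bs A_{0,h}^{-1}\|$ to get equality, although only the lower bound is needed. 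Second, your measurability remark invokes \eqref{eq:corA}, which is stronger than Assumption~\ref{ass:geocont}; this is harmless, since the paper simply presupposes that $\mathsf A_\varepsilon$ is well defined.
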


\begin{proof}
  Part (i) is Corollary~\ref{cor:hnu} integrated against $\rho$, followed
  by Cauchy--Schwarz in $L^2_\rho(U)$. For (ii) the form perturbation is the
  restriction of~\eqref{eq:bochner-form}, so all that is required is that the
  nominal constants on the tensor-product spaces equal their deterministic
  counterparts. Expanding in the orthonormal basis of $S_p$ gives $\mathsf
  A_0(u,v) = \sum_\alpha \mathsf a_0(u_\alpha,v_\alpha)$ and $\|u\|_{\mathbb
  X}^2 = \sum_\alpha \|u_\alpha\|_X^2$, so $\|\mathsf A_0\| \le \|\mathsf
  a_0\|$ by Cauchy--Schwarz in $\alpha$, with equality on testing against
  rank-one tensors $\psi \otimes u_0$. For the inf-sup constant, note first
  that Assumption~\ref{ass:stability} makes the induced operator $\bs A_{0,h}
  : X_h \to Y_h'$ injective, hence bijective since $\dim X_h = \dim Y_h$, so
  that $\gamma_{\mathsf a_0,h} = \|\bs A_{0,h}^{-1}\|^{-1}$ in the operator
  norms induced by the discrete $X$- and $Y'$-norms. The operator induced by
  $\mathsf A_0$ on the tensor-product spaces is $I_{S_p} \otimes \bs A_{0,h}$,
  whose inverse is $I_{S_p} \otimes \bs A_{0,h}^{-1}$; the Hilbertian tensor
  norm is a cross norm, so $\|I_{S_p} \otimes \bs A_{0,h}^{-1}\| = \|\bs
  A_{0,h}^{-1}\|$ and therefore $\gamma_{\mathsf A_0} = \gamma_{\mathsf
  a_0,h}$ exactly, independently of $\dim S_p$. The same argument applied to $\mathsf M_0, \mathsf N_0, \mathsf
  C_0$ leaves every constant entering $K_\star$ unchanged, uniformly in $\dim
  S_p$, and identifies $\mathbb P_{0,h,p} = I \otimes \bs P_{0,h}$ as the
  OP-PG preconditioner of the tensorized problem, repeated deterministic
  blocks at the matrix level (cf.~\eqref{eq:tensorP}). Dividing the form
  perturbation by $\gamma_{\mathsf A_0} \ge \underline\gamma_{\mathsf a_0}$
  gives $\bar\nu(\varepsilon)$, and Theorems~\ref{thm:ejh-op}
  and~\ref{thm:ejh-gmres} at $\mu=0$, with constants enlarged as
  in~\eqref{eq:main-cond}, give~\eqref{eq:bochner-cond} and the conditional
  GMRES statement.
\end{proof}

Corollary~\ref{cor:bochner} applies to standard stochastic Galerkin
discretizations of the pulled-back parametric EFIE. It is analogous
to mean-based and Kronecker-product preconditioning for stochastic
Galerkin systems~\cite{PowellElman2009}, but the repeated
deterministic block here is the \emph{nominal-geometry} Calder\'on
preconditioner rather than the preconditioner associated with the
mean operator. In general $\bs A_{0,h} \ne \mathbb E[\bs
A_h(\varepsilon,\cdot)]$ for a nonlinear geometric perturbation. Its
extension to the tensorized moment systems arising in first-order
sparse BEM~\cite{EscapilInchauspeJerezHanckes2024}, where a natural
candidate preconditioner is of the form $\bs P_0 \otimes
\overline{\bs P_0}$ rather than $I \otimes \bs P_0$, is left for
future work. Section~\ref{sec:sgexp} tests the
stochastic-dimension-uniform prediction numerically.

\subsection{Implementation: DOF alignment via mesh connectivity}
\label{sec:implementation}
Although the physical RWG and BC basis functions depend on the vertex
geometry, their indexing and support incidence structure are
determined entirely by mesh connectivity. Both families are indexed by
the \emph{interior edges of the primal mesh}: an RWG function is
attached to an edge via the two primal triangles sharing it, and its
dual BC function is attached to the \emph{same} primal edge, though
represented as a linear combination of RWG-type functions on the
barycentric refinement, supported on the union of barycentric cells
surrounding the edge's two endpoints~\cite{BuffaChristiansen2007,
AndriulliEtAl2008}. Because our perturbation map~\eqref{eq:pertmap}
preserves mesh connectivity exactly (Section~\ref{sec:shapeperts}),
the permutation $\Pi_X$ (resp.\ $\Pi_Y$) identifying $X_h(\Gnom)$ with
$X_h(\Gpert)$ (resp.\ $Y_h$) can be constructed purely
combinatorially, without any geometric computation. We fingerprint
each nominal RWG function by the (sorted) pair of \emph{vertex
indices} of the primal edge it lives on, and each perturbed RWG
function likewise, then match fingerprints. For BC functions our
implementation uses an equivalent connectivity-derived signature: the
sorted set of \emph{original} primal-mesh vertices, excluding
barycentric-refinement-introduced midpoints and centroids, contained
in each BC function's support (the vertex patches of the associated
edge's endpoints), which identifies the underlying primal edge
uniquely; bijectivity of the resulting matching is verified at
runtime for every perturbed mesh. This permutation-based alignment is
exact (not approximate) whenever connectivity is preserved, which is
guaranteed by construction for the vertex-displacement perturbation
family~\eqref{eq:pertmap} used throughout this paper, and is the
implementation realized in Section~\ref{sec:numerics}.

\begin{figure}[t]
  \centering
  \begin{tikzpicture}[x=1cm,y=1cm,font=\scriptsize]
      \coordinate (a1) at (0,0);      \coordinate (a2) at (1.75,0);
      \coordinate (a3) at (0.88,0.92); \coordinate (a4) at (0.88,-0.92);
      \meshpatch{nomblue}{a1}{a2}{a3}{a4}
      \node[nomblue,above=0pt] at ($ (a1)!0.5!(a2) $) {$e$};
      \node[black!60,left=1pt]       at (a1) {1};
      \node[black!60,right=1pt]      at (a2) {2};
      \node[black!60,above left=-2pt] at (a3) {3};
      \node[black!60,below left=-2pt] at (a4) {4};
      \node at (0.88,1.38) {nominal $\Gamma_{0,h}$};
  
      \begin{scope}[xshift=3.55cm]
        \coordinate (b1) at (0.14,-0.17);  \coordinate (b2) at (1.93,0.22);
        \coordinate (b3) at (0.65,1.08);   \coordinate (b4) at (1.12,-0.81);
        \coordinate (c1) at (0,0);         \coordinate (c2) at (1.75,0);
        \coordinate (c3) at (0.88,0.92);   \coordinate (c4) at (0.88,-0.92);
        \meshpatch{pergreen}{b1}{b2}{b3}{b4}
        \draw[-{Stealth[length=2.6pt]},black!55,line width=0.45pt]
          (c1) -- (b1) (c2) -- (b2) (c3) -- (b3) (c4) -- (b4);
        \node[pergreen,above=0pt] at ($ (b1)!0.5!(b2) $) {$e$};
        \node[black!60,left=1pt]        at (b1) {1};
        \node[black!60,right=1pt]       at (b2) {2};
        \node[black!60,above left=-2pt] at (b3) {3};
        \node[black!60,below left=-2pt] at (b4) {4};
        \node at (0.88,1.38) {perturbed $\Gamma_{\bs y,h}$};
      \end{scope}
  
      \draw[-{Stealth[length=4pt]},black!65,line width=0.8pt]
        (2.05,0.04) -- (3.30,0.04);
      \node[black!75,above=1pt] at (2.68,0.12) {$\bs\varphi_{\bs y}$};
      \node[black!55,align=center,below=1pt] at (2.68,-0.16)
        {vertices move,\\connectivity fixed};
  
      \begin{scope}[xshift=6.25cm]
        \draw[black!20,line width=0.4pt] (-0.22,-1.24) -- (-0.22,1.34);
        \node[anchor=north west,inner sep=0pt] at (0,1.30)
          {$\Pi_X,\ \Pi_Y$ \;the DOF correspondence};
        \node[anchor=north west,black!55,inner sep=0pt] at (0,0.94)
          {a permutation, read off from connectivity alone};
        \node[anchor=north west,nomblue,inner sep=0pt] at (0,0.52)
          {duality pairing --- transported};
        \node[anchor=north west,inner sep=0pt] at (0,0.24)
          {$\bs N_{XY}(\Gamma_{\bs y})=\Pi_X^{\top}\bs N_{XY,0}\,\Pi_Y$};
        \node[anchor=north west,black!55,inner sep=0pt] at (0,-0.14)
          {exact, not just to leading order};
        \node[anchor=north west,pergreen,inner sep=0pt] at (0,-0.54)
          {dual-mesh operator --- frozen};
        \node[anchor=north west,inner sep=0pt] at (0,-0.82)
          {$\bs T_{YY,0}$ \;reused unchanged};
        \node[anchor=north west,black!55,inner sep=0pt] at (0,-1.20)
          {the only approximation};
      \end{scope}
    \end{tikzpicture}
  \caption{Transport of the Calder\'on preconditioner. Shaded: the
    barycentric patches supporting the BC function attached to the interior
    edge $e$. The pairing transport is exact for any amplitude
    (Lemma~\ref{lem:nxy-exact}).}
  \label{fig:transport}
\end{figure} 
The duality pairing is carried \emph{exactly} by this transport, and not
merely to leading order in $\varepsilon$.

\begin{lemma}[exactness of the transported pairing]
  \label{lem:nxy-exact}
  Let $\tau_0, \tau_r \subset \R^3$ be flat triangles with unit normals $\bs
  n_0, \bs n_r$, and let $\Phi : \tau_0 \to \tau_r$ be affine and bijective.
  Write $\bs F := \mathrm D_\Gamma\Phi : T\tau_0 \to T\tau_r$ for its
  (constant) tangential differential, a linear isomorphism of the two
  tangent planes, and $J := \mathrm{d}s_r/\mathrm{d}s_0$ for the constant
  surface Jacobian, so that $|\det\bs F| = J$ in orthonormal bases of
  $T\tau_0$ and $T\tau_r$. Orient $\bs n_r$ so that $\Phi$ preserves the
  induced orientation, i.e.\ $\det\bs F = J > 0$. Let $\calP$ be the
  contravariant surface Piola transform, $(\calP\bs u)\circ\Phi = J^{-1}\bs
  F\bs u$. Then
  \begin{equation}
    \label{eq:pairing-exact}
    \big\langle \calP\bs u, \calP\bs v \big\rangle_{\times,\tau_r}
    \;=\; \big\langle \bs u, \bs v \big\rangle_{\times,\tau_0}
  \end{equation}
  for all tangential fields $\bs u,\bs v$ on $\tau_0$. Consequently, if the
  nominal and perturbed triangulations share their connectivity and
  correspond element by element under such a map, and if the
  Buffa--Christiansen coefficients are determined by that connectivity
  alone, then
  \begin{equation}
    \label{eq:nxy-exact}
    \bs N_{XY}(\Gpert) \;=\; \Pi_X^{\top}\,\bs N_{XY,0}\,\Pi_Y
  \end{equation}
  identically, with no dependence on the vertex positions.
\end{lemma}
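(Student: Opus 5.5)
The argument has two parts: a pointwise algebraic identity on a single triangle, followed by assembly of the global matrix from elementwise contributions.

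\emph{Local identity.} Fix $\bs x_0 \in \tau_0$ and $\bs x_r = \Phi(\bs x_0)$. The integrand of $\langle \calP\bs u,\calP\bs v\rangle_{\times,\tau_r}$ is $((\calP\bs u)\times\bs n_r)\cdot(\calP\bs v) = \bs n_r\cdot\big((\calP\bs v)\times(\calP\bs u)\big)$, a triple product. This equals the oriented area form of $T\tau_r$ evaluated on the pair $(\calP\bs v,\calP\bs u)$. The key fact is that for any linear map $\bs F$ between oriented two-dimensional spaces, $\omega_r(\bs F\bs a,\bs F\bs b) = \det\bs F\,\omega_0(\bs a,\bs b)$. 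This follows by writing $\bs F$ in orthonormal oriented bases, where the area form is the $2\times2$ determinant, and using multiplicativity of determinants. Substituting $(\calP\bs u)\circ\Phi = J^{-1}\bs F\bs u$ makes the pointwise integrand equal to $J^{-2}\det\bs F\,\bs n_0\cdot(\bs v\times\bs u) = J^{-1}\,(\bs u\times\bs n_0)\cdot\bs v$, using $\det\bs F = J$. The change of variables $\mathrm ds_r = J\,\mathrm ds_0$ then cancels the remaining $J^{-1}$ exactly, which gives \eqref{eq:pairing-exact}.

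The main point to get right is orientation. The sign of the triple product depends on $\det\bs F$ and not only on $|\det\bs F|$. This is why the hypothesis that $\bs n_r$ is oriented consistently with $\Phi$ is needed. For connectivity-preserving perturbations with $\varepsilon<1/(2C_0)$, no triangle flips, so the orientation induced by the consistently ordered vertex lists is inherited.

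\emph{Global statement.} I would write $[\bs N_{XY}]_{ij} = \langle \bs b_i^X,\bs b_j^Y\rangle_{\times}$ and decompose it into a sum over the barycentric triangles $\tau$. This is possible because each RWG function restricted to a barycentric cell is still affine in local coordinates: it is the restriction of an RWG function on its primal triangle. Each BC function is, by hypothesis, a fixed linear combination of barycentric RWG functions, with coefficients depending only on connectivity. I would also use the standard fact that the RWG and barycentric RWG functions on the perturbed mesh are exactly the contravariant Piola images of the nominal ones under the elementwise affine map $\Phi_\tau$. This holds because the RWG normalisation by edge length is itself Piola-invariant, since the normal flux through an edge is preserved.

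Applying \eqref{eq:pairing-exact} to each cell $\tau$ with $\Phi = \Phi_\tau$ shows that every elementwise contribution is independent of the vertex positions. The entries indexed by matched fingerprints are therefore identical. The permutations $\Pi_X$ and $\Pi_Y$ of Section~\ref{sec:implementation} only relabel rows and columns, which yields \eqref{eq:nxy-exact}.

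I expect the main obstacle to be verifying cleanly that the deformed basis functions are precisely Piola images, including the edge-length normalisation and the BC coefficient independence. The first thing I would check is the normal-flux characterisation of the RWG degrees of freedom.
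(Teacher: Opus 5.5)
Your local identity is correct and matches the paper's computation: the triple product is read as the oriented area form, $\det\bs F=J$, and the remaining $J^{-1}$ is cancelled by $\mathrm ds_r=J\,\mathrm ds_0$. Your cell-by-cell assembly over the barycentric refinement is also the paper's route. It works because affine maps preserve midpoints and centroids, so each primal element map carries nominal barycentric cells onto perturbed ones.

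\textbf{The gap.} It sits in the step you yourself flag as the main obstacle. The claim that ``the RWG normalisation by edge length is itself Piola-invariant, since the normal flux through an edge is preserved'' is false, and the paper states the opposite right after the lemma.
\begin{itemize}
\item The contravariant Piola transform preserves the \emph{integrated} flux $\int_e\bs u\cdot\bs m\,\mathrm dl$ through corresponding edges. The pointwise normal component is rescaled by $\ell_e^{(0)}/\ell_e^{(\bs y)}$.
\item Concretely, take $\bs f(\bs x)=c\,(\bs x-\bs p_0)/(2A_0)$ on a primal triangle. Using $\bs F(\bs x-\bs p_0)=\Phi(\bs x)-\Phi(\bs p_0)$ and $JA_0=A_r$ gives $(\calP\bs f)(\Phi(\bs x))=c\,(\Phi(\bs x)-\Phi(\bs p_0))/(2A_r)$, with the \emph{same} constant $c$.
\item With $c=1$ (unit total flux, the Jacobian-normalised convention of \texttt{BEAST.jl}), this is exactly the perturbed basis function.
\item With the classical RWG constant $c=\ell_e$ (unit normal component, total flux $\ell_e$), the perturbed function carries $c=\ell_e^{(\bs y)}$. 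It therefore equals $(\ell_e^{(\bs y)}/\ell_e^{(0)})\,\calP\bs f$.
\end{itemize}
Under the classical convention your argument gives $\bs N_{XY}(\Gpert)$ only up to the vertex-dependent row scaling $\mathrm{diag}(\ell_e^{(\bs y)}/\ell_e^{(0)})$. It may also pick up further geometric factors on the BC side. It does not give \eqref{eq:nxy-exact}.

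\textbf{How to repair it.} Piola-covariance is a property of the normalisation, not of RWG functions as such.
\begin{itemize}
\item The proof needs the basis functions to be Piola images of one fixed reference shape function with no edge-length factor. This is what the paper's proof means by ``given by the same reference-element formula''.
\item The connectivity-only hypothesis on the BC coefficients must be read relative to that same flux-normalised barycentric basis. With length- or area-scaled barycentric functions, the BC weights pick up edge lengths or face areas, and exactness fails.
\item The normal-flux check you planned is the right tool, but it settles the question against the edge-length convention. Unit-flux functions are Piola-covariant; unit-normal-component functions are not.
\end{itemize}
Once this convention is stated as part of the hypotheses, the rest of your proof goes through unchanged.

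A smaller point: $\varepsilon<1/(2C_0)$ controls the smooth map $\bs\varphi_{\bs y}$, not its piecewise-affine interpolant element by element. The no-flip property is part of the lemma's hypothesis (``such a map''). The paper asserts it only for $\varepsilon$ small enough that no element degenerates.
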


\begin{proof}
  For tangential fields on a flat triangle the integrand of $\langle
  \cdot,\cdot\rangle_\times$ is minus the oriented area form: $(\bs a\times\bs n)\cdot\bs b = -(\bs a\times\bs b)\cdot\bs n$ and, writing
  $\det_{\tau}(\bs a,\bs b)$ for the determinant of the coefficients of
  tangential $\bs a,\bs b$ in a positively oriented orthonormal basis of
  $T\tau$, one has $(\bs a\times\bs b)\cdot\bs n = \det_{\tau}(\bs a,\bs b)$.
  Since $\bs F$ is linear from $T\tau_0$ to $T\tau_r$,
  \begin{equation*}
    \det\nolimits_{\tau_r}(\bs F\bs u, \bs F\bs v)
      \;=\; (\det \bs F)\,\det\nolimits_{\tau_0}(\bs u,\bs v)
      \;=\; J\,\big(\bs u \times \bs v\big)\cdot\bs n_0 .
  \end{equation*}
  Applying this to $\calP\bs u = J^{-1}\bs F\bs u$ and $\calP\bs v =
  J^{-1}\bs F\bs v$ gives $(\calP\bs u \times \calP\bs v)\cdot\bs n_r =
  J^{-2}\cdot J\,(\bs u \times \bs v)\cdot\bs n_0 = J^{-1}(\bs u \times \bs
  v)\cdot\bs n_0$, and substituting $\mathrm{d}s_r = J\,\mathrm{d}s_0$ in the
  integral over $\tau_r$ cancels the remaining factor. Four Jacobian factors
  cancel in pairs: one $J^{-1}$ from each of the two Piola transforms,
  against one $J$ from the tangential determinant and one $J$ from the
  change of measure.

  For~\eqref{eq:nxy-exact}, the perturbation~\eqref{eq:pertmap} displaces
  vertices and leaves triangles flat, so on each element it restricts to an
  affine bijection of the above form; for $\varepsilon$ small enough that no
  element degenerates, $\bs F$ is a small perturbation of the identity on the
  tangent plane and the orientation convention above is the one inherited
  from $\Gnom$. The RWG functions on $\Gpert$ are then the Piola
  transports of their nominal counterparts, being given by the same
  reference-element formula; the BC functions are, under the stated
  hypothesis, the same fixed combinations of RWG-type functions on the
  barycentric refinement, and so transport with them. Applying
  \eqref{eq:pairing-exact} elementwise and summing over the element
  correspondence gives the claim; $\Pi_X$ and $\Pi_Y$ record the
  relabelling alone.
\end{proof}

Lemma~\ref{lem:nxy-exact} is what separates the transport used here from an
update formula: it is an identity, not an approximation that happens to be
accurate. Both hypotheses are met by the discretization used here, and
neither is vacuous. The RWG shape functions of \texttt{BEAST.jl} carry the
Jacobian normalisation and no edge-length factor, so they are Piola-covariant
by construction; an edge-length-normalised convention would transport them
only up to the diagonal factor $\ell_e^{(0)}/\ell_e^{(\bs y)}$. The
Buffa--Christiansen weights are the combinatorial $1/n$ of the surrounding
patch in the default construction, but face areas and port lengths in the
area-scaled variant, for which the lemma fails. The transported nominal pairing
agrees with the freshly assembled one to the level of the floating-point
assembly itself at a $30\%$ perturbation, while the geometry-dependent $\bs
T_{YY}$ does not (Section~\ref{sec:campaign}).

\section{A practical cost model}
\label{sec:numanalysis}

Let $t_Z, t_T, t_N, t_b$ denote the assembly times of $\bs Z_{XX}, \bs
T_{YY}, \bs N_{XY}, \bs b$ respectively, and let $t_{\mathrm{solve}}$
denote the GMRES solve time (dominated by $O(\mathrm{dof}^2)$
dense mat-vecs per iteration). A fresh solve of a new realization costs
\begin{equation}
  \label{eq:cost-fresh}
  T_{\mathrm{fresh}} = t_Z + t_T + t_N + t_b + t_{\mathrm{solve}}
    (\mathrm{iters}_{\mathrm{fresh}}),
\end{equation}
The frozen strategy assembles $\bs T_{YY,0}, \bs N_{XY,0}$
\emph{once} and reuses them, via the permutation $\Pi_X,\Pi_Y$ of
Section~\ref{sec:implementation}, for every subsequent realization. It
costs, per new realization,
\begin{equation}
  \label{eq:cost-frozen}
  T_{\mathrm{frozen}} = t_Z + t_b + t_{\mathrm{solve}}
    (\mathrm{iters}_{\mathrm{frozen}}).
\end{equation}
Here $t_{\mathrm{solve}}$ is written as a single function of the
iteration count for both strategies, i.e.\ the per-iteration cost is taken
to be identical. 
In the runs reported here it is not, by a factor of about $3.6$, for a
reason of storage rather than of conditioning that
Section~\ref{sec:campaign} measures and explains; applying the permutation
lazily inside the matrix--vector products removes it, so the speedups
reported in Section~\ref{sec:numerics} are correspondingly conservative. The alignment itself is a permutation of row and column indices, built
combinatorially in $O(\mathrm{dof})$ time, so its time contribution
$t_{\mathrm{align}}$ is negligible against every term
in~\eqref{eq:cost-frozen} and is omitted; its memory cost, the
$O(\mathrm{dof}^2)$ permuted copy coexisting with the nominal blocks, is
not.
Since $t_T \gg t_Z, t_b$
(Table~\ref{tab:componenttiming}) and
$\mathrm{iters}_{\mathrm{frozen}} \approx \mathrm{iters}_{\mathrm{fresh}}$
(Theorem~\ref{thm:main}; observed throughout
Section~\ref{sec:numerics}), the frozen strategy's per-realization
speedup $T_{\mathrm{fresh}}/T_{\mathrm{frozen}}$ is governed almost
entirely by the ratio $(t_Z+t_T+t_N+t_b)/(t_Z+t_b)$, i.e.\ by how much
of the total assembly cost is attributable to the dual-mesh operator
$\bs T_{YY}$. For an $N$-realization many-query campaign (e.g.\ Monte
Carlo shape UQ), the total cost ratio is
\begin{equation}
  \label{eq:speedup}
  \begin{aligned}
  \frac{T_{\mathrm{fresh}}^{\mathrm{total}}}{T_{\mathrm{frozen}}^{
    \mathrm{total}}} &= \frac{t_Z+t_T+t_N+t_b + t_{\mathrm{solve}}(
    \mathrm{iters}_{\mathrm{fresh}})}{t_Z + \tfrac1N(t_T+t_N) + t_b +
    t_{\mathrm{solve}}(\mathrm{iters}_{\mathrm{frozen}})} \\[2pt]
    &\xrightarrow[N\to\infty]{}\; \frac{t_Z+t_T+t_N+t_b +
    t_{\mathrm{solve}}(\mathrm{iters}_{\mathrm{fresh}})}{t_Z + t_b +
    t_{\mathrm{solve}}(\mathrm{iters}_{\mathrm{frozen}})}.
  \end{aligned}
\end{equation}
The denominator amortizes the one-time nominal assembly $t_T + t_N$
over the $N$ frozen realizations; the fresh strategy incurs these
costs at every realization, which is why the numerator contains the
full $t_T + t_N$ per query. Section~\ref{sec:numerics} reports this
ratio directly from measured timings.

Three figures follow from~\eqref{eq:speedup} and should not be confused.
The \emph{measured} factor is $T_{\mathrm{fresh}}/T_{\mathrm{frozen}}$ per
new realization, $29$--$31\times$ in the dense runs of
Section~\ref{sec:campaign}; since the per-realization frozen cost
in~\eqref{eq:cost-frozen} already excludes $t_T+t_N$, this is at the same
time the $N \to \infty$ limit of~\eqref{eq:speedup}, and it is the figure
quoted in the abstract and in Section~\ref{sec:conclusions}. Removing the
storage artifact of Section~\ref{sec:campaign} raises it to $\approx
36\times$. Neglecting the solve cost altogether gives $1 +
(t_T+t_N)/(t_Z+t_b) \approx 38$, the ceiling the other two approach.

What governs all three is the ratio $t_T/t_Z$, equal to $37$ for the dense
assembly of Table~\ref{tab:componenttiming}. Fast-assembly implementations
reduce it, and do so by accelerating the preconditioner rather than the
operator, a preconditioner tolerating far cruder approximation than the
system it preconditions. Applying~\eqref{eq:speedup} to the published
$\mathcal H$-matrix timings
of~\cite{EscapilInchauspeJerezHanckes2019,FierroJerezHanckes2020} gives
$2.0$ to $2.3$: under compression the solve is no longer negligible against
assembly, and~\eqref{eq:speedup} amortizes $t_T+t_N$ against whatever
per-query cost remains, so compression shrinks the numerator without
shrinking that remainder. Neglecting the solve instead would give $4.2$ to
$5.9$; we quote the former.

\begin{table}[t]
  \centering
  \caption{Per-matrix assembly time on the unit sphere, $\kappa=2$,
    $h=0.1$ (dof $=4827$), realization~1. The dual-mesh operator $\bs
    T_{YY}$ dominates assembly cost regardless of perturbation
    amplitude.}
  \label{tab:componenttiming}
  \begin{tabular}{@{}lrrrrr@{}}
    \toprule
    Amplitude & $t_{Z_{XX}}$ (s) & $t_{T_{YY}}$ (s) & $t_{N_{XY}}$ (s) &
      $t_b$ (s) & $t_{T_{YY}}$ / total \\
    \midrule
    3\%  & 11.64 & 430.34 & 0.19 & 0.035 & 97.3\% \\
    30\% & 11.70 & 431.87 & 0.18 & 0.035 & 97.3\% \\
    \bottomrule
  \end{tabular}
\end{table}

\section{Numerical experiments}
\label{sec:numerics}

All experiments discretize~\eqref{eq:efie} with RWG/BC elements via the
BEAST.jl boundary element library. The incident field is the unit-amplitude
plane wave
\begin{equation}
  \bs U^{\mathrm{inc}}(\bs x) = \bs p\, e^{-\iota\kappa\, \bs d\cdot\bs
  x}, \qquad \bs d = \hat{\bs z},\quad \bs p = \hat{\bs x},\quad \bs
  d\cdot\bs p = 0,
\end{equation}
in every experiment. The problem sizes below are moderate, and
deliberately so: what they limit is the absolute timing, not the
conclusion. The transport is exact by connectivity
(Lemma~\ref{lem:nxy-exact}) and the bound of Theorem~\ref{thm:main} is
uniform in $h$, so neither the mechanism nor its analysis refers to the
number of unknowns; the speedups of Section~\ref{sec:numanalysis}, which
do, are reported as such. The outer solver is (flexible, unrestarted)
GMRES~\cite{Saad1993} (flexibility is required since the Calder\'on
preconditioner is applied through inner GMRES solves against $\bs
N_{XY}$ and $\bs N_{XY}^{\top}$), target residual tolerance $10^{-8}$
(both absolute and relative), maximum $1500$ iterations. The inner
solves use unrestarted GMRES with the \emph{same fixed} tolerances
(absolute and relative $10^{-8}$, maximum $1500$ iterations), zero
initial guess, and no further preconditioning; the inner tolerance is
not varied adaptively.
Section~\ref{sec:sgexp} is the one exception, for the reason given
there. Retaining inner GMRES elsewhere is conservative: a factorization would be computed once for the
frozen strategy, which reuses $\bs N_{XY,0}$ at every realization, and afresh
for the freshly assembled one, which does not, so adopting it throughout
would \emph{raise} the speedups below while removing the per-iteration
asymmetry of Section~\ref{sec:numanalysis} and the storage artifact of
Section~\ref{sec:campaign}.
\footnote{The stopping criterion applies to the residual of the system GMRES
iterates on, which for the preconditioned solves is the left-preconditioned
residual $\|\bs P(\bs b-\bs Z\bs u_k)\|/\|\bs P\bs b\|$, whereas the tables
report the unpreconditioned true residual $\|\bs b-\bs Z\bs u\|/\|\bs b\|$ of
the returned solution. Meeting a $10^{-8}$ tolerance in the preconditioned
norm therefore does not give the same numerical value in the unpreconditioned
one, which is why the preconditioned runs below show true residuals of order
$10^{-8}$--$10^{-7}$. All reported true residuals are at or below
$1.5\times10^{-6}$, the largest at the smallest wavenumber of the mismatch
sweep of Section~\ref{sec:kappamismatch}, and elsewhere at or below
$6.3\times10^{-7}$.}
For the numerical experiments we equip the finite-dimensional
truncation of the deterministic parameter domain $U$ of
Section~\ref{sec:shapeperts} with its product uniform measure and draw
random members of the parametric family: shape perturbations of the
sphere (radius $R=1$) displace each nominal vertex radially to
\begin{equation}
  \label{eq:pertformula}
  r_{\bs y}(\theta,\phi)
  \;=\; R\left(1 + \varepsilon \sum_{l=2}^{6} \sum_{m=-l}^{l}
    \frac{y_{lm}}{l^{2}}\, Y_l^m(\theta,\phi)\right),
  \qquad y_{lm} \overset{\text{i.i.d.}}{\sim} \mathrm{Unif}(-1,1),
\end{equation}
where $(\theta,\phi)$ are the vertex's spherical angles and $Y_l^m$
are the real spherical harmonics in the standard convention:
orthonormal on the unit sphere, with the Condon--Shortley phase.
The $1/l^2$ decay makes low-degree modes dominate, keeping perturbed
shapes star-shaped. The coefficients are drawn once per realization
via
 $$\texttt{MersenneTwister(1000+realization)} $$
 
 \noindent
 for reproducibility
(realization~$0$ is always the exact nominal shape), and an amplitude
sweep rescales the \emph{same} draw $\{y_{lm}\}$ by varying
$\varepsilon$ only. The ellipsoid inherits the same deformation field:
a nominal vertex $(a u_1, b u_2, c u_3)$, image of the unit-sphere
vertex $\bs u$, moves to $(1 + \delta(\theta,\phi))(a u_1, b u_2, c
u_3)$, where $\delta$ is the parenthesized sum
in~\eqref{eq:pertformula} evaluated at the angles of $\bs u$ (not of
the ellipsoid point), so the perturbation's angular structure is
identical to the sphere case. We
report a canonical sphere (radius~1) and a triaxial ellipsoid
(semi-axes $(a,b,c) = (1.5,1.0,0.7)$, no residual rotational symmetry
beyond the discrete sign-flip group), the latter perturbed identically
via the underlying unit-sphere angular parametrization
(Section~\ref{sec:ellipsoid}), and a non-convex, non-smooth Fichera
corner geometry under a different, spatially localized perturbation
family (Section~\ref{sec:fichera}).

\emph{Reproducibility.} All computations use Julia v1.11.5 with BEAST.jl
v2.8.0, CompScienceMeshes.jl v0.10.0 and Makeitso.jl v2.1.1, with dense
(uncompressed) Galerkin assembly on an Apple M3 Max (36\,GB RAM; 1 Julia
thread, 10 BLAS threads). The NASA-almond meshes, field coefficients with
their measured $\sup|\mathrm D\bs V|$, amplitude ladder, seeds, timings and
iteration counts are archived with the code, so every perturbed geometry of
Section~\ref{sec:almond} is reproducible exactly and without re-drawing.
Timings should be read relative to this configuration; iteration counts and
residuals are insensitive to it, up to floating-point-library and threading
effects.

\begin{figure}[t]
  \centering
  \includegraphics[width=\textwidth]{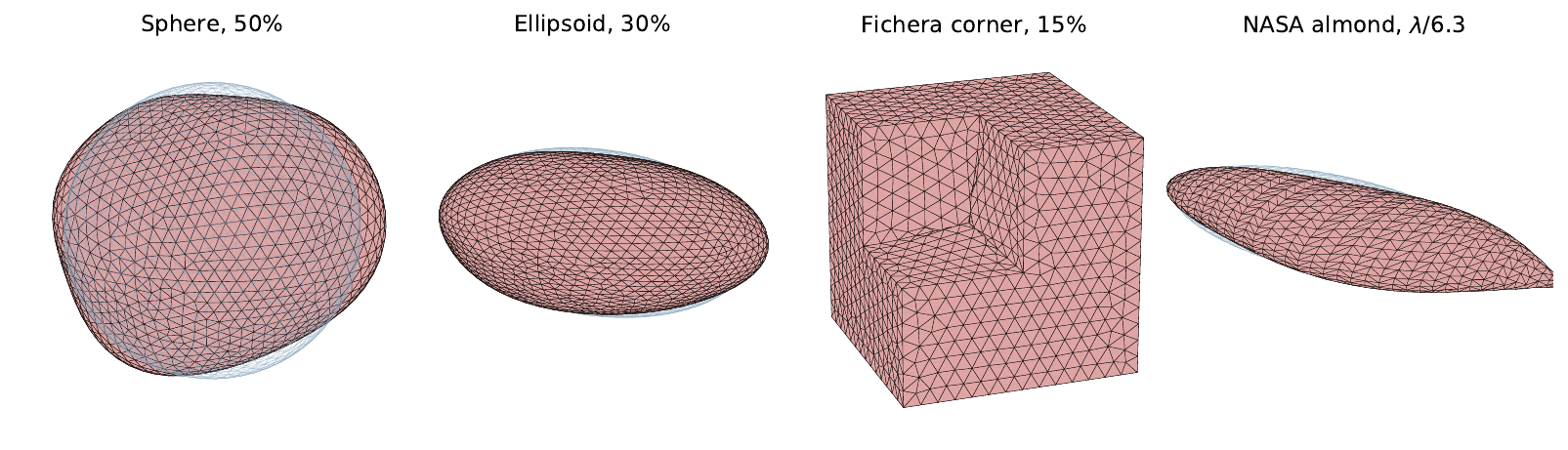}
  \caption{Perturbed meshes, each drawn over a thin outline of its own
    nominal mesh. Sphere and ellipsoid: spherical-harmonic radial deformation
    at the largest amplitudes of Tables~\ref{tab:sphereamp}
    and~\ref{tab:ellipsoid} ($50\%$, $30\%$). Fichera corner: the
    corner-reaching perturbation of Section~\ref{sec:fichera} at
    $\varepsilon=15\%$, the configuration of
    Table~\ref{tab:ficheracorner}, viewed into the reentrant notch. NASA
    almond: one draw of the campaign of Section~\ref{sec:almond} at the top
    amplitude $\varepsilon = \lambda/6.3$ (case~U, seed $5012$), whose
    largest displacement is $13.30$\,mm and mean over the surface
    $9.75$\,mm.}
  \label{fig:mesh-comparison}
\end{figure}

\subsection{Sphere: amplitude sweep}
\label{sec:sphereamp}

Fixing $\kappa=2$, $h=0.1$ (dof$\,{=}\,4827$) and a single realization
(seed 1001), Table~\ref{tab:sphereamp} sweeps the nominal perturbation amplitude
$\varepsilon$ from 3\% to 50\% (actual achieved radial displacement up
to $\pm 17.1\%$ at $\varepsilon=50\%$, that is, up to $1.7h$ and about
$\lambda/18$ at this mesh and wavenumber; all meshes remain valid, minimum
triangle area $> 0$ throughout). Plain (unpreconditioned) GMRES requires
$272$--$290$ iterations across the sweep; the DOF-aligned frozen
preconditioner requires only $11$--$15$ iterations, essentially tracking
the freshly assembled Calder\'on preconditioner's $10$--$13$ iterations,
a gap of at most $2$ iterations even at 50\% amplitude, consistent
with the perturbative robustness predicted by
Theorem~\ref{thm:main}.

\begin{table}[t]
  \centering
  \scriptsize
  \caption{Sphere, amplitude sweep, $\kappa=2$, $h=0.1$, single
    realization (seed 1001). Max.\ achieved radial displacement grows
    with nominal amplitude; frozen and fresh Calder\'on iteration
    counts remain close throughout. The last three columns report the
    converged true relative GMRES residual $\|\bs b-\bs Z\bs u\|/
    \|\bs b\|$ for each strategy, confirming all three solves reach
    comparable accuracy despite the large iteration-count gap for
    plain GMRES.}
  \label{tab:sphereamp}
  \begin{tabular}{@{}rrrrrrrrr@{}}
    \toprule
    & & & & & Frozen & \multicolumn{3}{c}{True residual} \\
    Amplitude & Max.\ disp.\ & Plain it. & Frozen it. & Fresh it.
      & $-$ fresh & plain & frozen & fresh \\
    \midrule
    3\%  & 1.0\%  & 272 & 11 & 10 & 1 & 9.0e-9 & 5.3e-7 & 5.3e-7 \\
    10\% & 3.4\%  & 273 & 11 & 11 & 0 & 9.4e-9 & 5.4e-7 & 4.0e-8 \\
    20\% & 6.9\%  & 275 & 13 & 12 & 1 & 9.3e-9 & 2.2e-7 & 5.2e-8 \\
    30\% & 10.3\% & 281 & 13 & 12 & 1 & 9.9e-9 & 3.6e-8 & 5.9e-8 \\
    40\% & 13.7\% & 287 & 14 & 13 & 1 & 9.1e-9 & 3.7e-8 & 6.8e-8 \\
    50\% & 17.1\% & 290 & 15 & 13 & 2 & 9.6e-9 & 2.8e-8 & 7.4e-8 \\
    \bottomrule
  \end{tabular}
\end{table}

\subsection{Sphere: statistical campaign (\texorpdfstring{$N=20$}{N=20}
  realizations)}
\label{sec:campaign}
To confirm the amplitude sweep is not an artifact of a single random draw,
we ran $N=20$ independent realizations at fixed amplitude, for both
$\varepsilon=3\%$ and $\varepsilon=30\%$ (Table~\ref{tab:campaign}). Frozen
tracks fresh throughout: the mean gap is at most one iteration at either
amplitude and never exceeds two for any individual realization, against
$272$--$282$ plain iterations. All runs converged, and the frozen and fresh
solutions agree with the plain-GMRES solution to within $2\times10^{-7}$
relative $\ell^2$ norm.

\begin{table}[t]
  \centering
  \scriptsize
  \caption{Sphere, $N=20$-realization campaign, $\kappa=2$, $h=0.1$.
    Iteration counts (min--max, mean) and average practical wall-clock
    cost per new realization.}
  \label{tab:campaign}
  \begin{tabular}{@{}lrrrrr@{}}
    \toprule
    Amplitude & Plain iters & Frozen iters & Fresh iters &
      $T_{\mathrm{fresh}}^{\mathrm{avg}}$ (s) &
      $T_{\mathrm{frozen}}^{\mathrm{avg}}$ (s) \\
    \midrule
    3\%  & 272 (272--272) & 10.35 (10--11) & 10.00 (10--10) & 443.1 & 14.5 \\
    30\% & 279.25 (275--282) & 12.85 (12--13) & 11.95 (11--12) & 444.8 & 15.3 \\
    \bottomrule
  \end{tabular}
\end{table}

Table~\ref{tab:campaign}'s wall-clock columns
use~\eqref{eq:cost-fresh}--\eqref{eq:cost-frozen} with the component timings of
Table~\ref{tab:componenttiming} and the campaign-averaged GMRES solve
times ($0.87$--$1.05$\,s fresh, $2.80$--$3.55$\,s frozen). The frozen
strategy's inner GMRES solves against $\bs N_{XY,0}$ cost about $3.6$ times
more per outer iteration than the fresh strategy's, at comparable
\emph{outer} iteration counts. The cause is storage, not conditioning. The
RWG--BC pairing is fixed by mesh connectivity alone, so the DOF-aligned
nominal pairing and the perturbed one agree to machine precision: at
$\varepsilon = 30\%$ we measure $\|\Pi_X^{\top}\bs N_{XY,0}\Pi_Y - \bs
N_{XY}\|_F / \|\bs N_{XY}\|_F = 7.7\times10^{-16}$, against
$3.9\times10^{-2}$ for the corresponding difference in $\bs T_{YY}$, which
does depend on the geometry. What differs is that applying $\Pi_X,\Pi_Y$ to
a matrix that is $0.4\%$ populated densifies it, and $\bs N_{XY}$ enters
through two inner solves per outer iteration. Re-running the frozen
preconditioner with the \emph{same} pairing stored densely reproduces the
frozen cost, $0.209$ against $0.216$\,s per outer iteration, at identical
iteration counts and residuals, while the sparse format gives $0.060$\,s.
The overhead is therefore removed by the lazy permutation of
Section~\ref{sec:numanalysis} rather than by re-assembling $\bs N_{XY}$:
refreshing it at every realization changes no outer iteration count at any
amplitude tested. The same artifact reappears on the NASA almond of
Section~\ref{sec:almond}, at $2.9\times$ rather than $3.6\times$, on a
geometry and a mesh with nothing else in common; that it survives the change
of test case is what one expects of a storage effect and not of a
conditioning one. The resulting speedup per new realization is
$T_{\mathrm{fresh}}/T_{\mathrm{frozen}} = 30.6\times$ at $3\%$ and
$29.1\times$ at $30\%$, essentially independent of amplitude over the range
tested, since it is governed by the assembly-cost ratio of
Table~\ref{tab:componenttiming} rather than by the (comparably small, and
comparably matched) iteration counts.

\subsection{Direct test of the perturbation estimate}
\label{sec:linearity}
The experiments so far report iteration counts, which the theory reaches only
through Corollary~\ref{cor:hnu} and Theorem~\ref{thm:main}. Two predictions of
that chain are measurable directly: the perturbation of the bilinear form grows
linearly in $\varepsilon$ with a constant independent of $h$, and the resulting
bound on $\kappa_S(\bs P_0 \bs A_{\varepsilon,h})$ is likewise $h$-uniform.
Both quantities are computable at these problem sizes, in the DOF-aligned
nominal basis of Section~\ref{sec:implementation}.

Figure~\ref{fig:linearity}(a) plots $\|\bs A_{\varepsilon,h} - \bs
A_{0,h}\|_F$ against $\varepsilon$ at three mesh levels spanning a fourfold
range in degrees of freedom ($1224$, $2292$ and $4827$ at $h = 0.2,
0.15, 0.1$: the meshes of Table~\ref{tab:href}). The least-squares slopes on
the logarithmic scale are indistinguishable from one, with $R^2 >
1-10^{-6}$, so the dependence is linear to within $0.2\%$. For fixed $h$
the expansion $\bs A_{\varepsilon,h} - \bs A_{0,h} = \varepsilon \bs D_h +
O(\varepsilon^2)$, which holds because the amplitude sweep rescales a single
fixed deformation direction, predicts exactly this in any fixed matrix norm.

Panel~(a) therefore tests first-order dependence at fixed $h$ and nothing
more: it does \emph{not} test the $h$-uniform trace-space
estimate~\eqref{eq:geocont}, since the equivalence between coefficient-space
and trace-space norms is itself $h$-dependent --- which is also why the three
curves sit at different heights. Mesh-uniformity is tested separately in
panel~(b), through $\kappa_S(\bs P_0 \bs A_{\varepsilon,h})$ itself, computed
from the eigenvalues of the frozen-preconditioned matrix. The three curves
lie on top of one another, agreeing to within $0.8\%$ at every amplitude
tested, with nominal values $1.798$, $1.796$ and $1.795$; across the sweep
the conditioning rises only from $1.79$ to $2.33$, a $30\%$ increase for a
realized displacement reaching $17\%$. This exhibits the $h$-uniformity
of~\eqref{eq:main-cond} at the level of the quantity the theorem bounds. We
do not claim to measure $C_{\mathrm{geo}}$ or $\nu_h$: the sharp constant of
Corollary~\ref{cor:hnu} is a supremum in the $\bs H^{-1/2}(\dive_\Gamma)$
norm, whose discrete Gram matrix is not available here.

\begin{figure}[t]
  \centering
  \includegraphics[width=\textwidth]{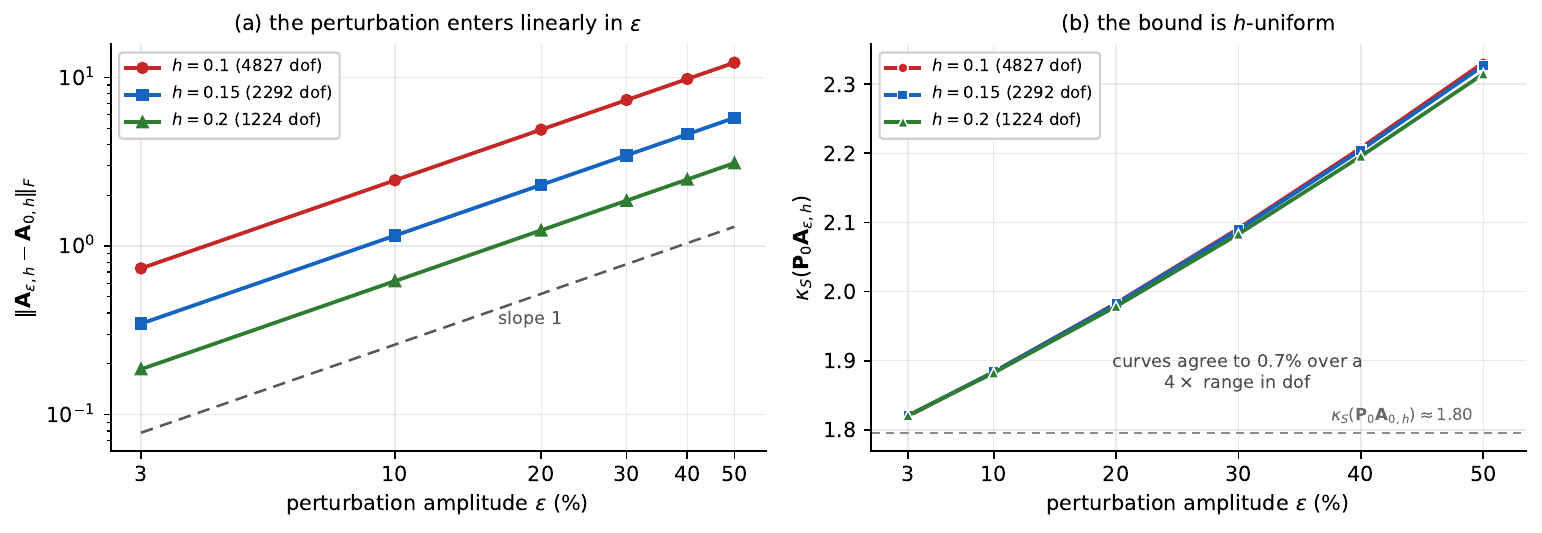}
  \caption{Direct test of the perturbation estimate, nominal sphere,
    $\kappa = 2$, single realization (seed 1001), at three mesh levels.
    (a)~$\|\bs A_{\varepsilon,h}-\bs A_{0,h}\|_F$ against the perturbation
    amplitude, with a slope-one reference: the least-squares slopes are
    $1.001$, $0.999$ and $0.999$ at $h = 0.2, 0.15, 0.1$.
    (b)~$\kappa_S(\bs P_0\bs A_{\varepsilon,h})$,
    the quantity bounded in~\eqref{eq:main-cond}, computed from the
    eigenvalues of the frozen-preconditioned matrix; the curves agree to
    $0.8\%$ over a fourfold range in degrees of freedom.}
  \label{fig:linearity}
\end{figure}

\subsection{Mesh refinement at fixed perturbation}
\label{sec:refinement}
Table~\ref{tab:href}
provides direct numerical evidence of $h$-uniform behavior:
fixing a moderate deformation (sphere, amplitude $30\%$,
realization~1, $\kappa=2$) and sweeping $h$ from $0.2$ down to $0.075$
($1224$ to $8172$ dof), plain GMRES deteriorates steadily with
refinement ($168$ to $346$ iterations), the expected
first-kind-operator behavior, while the frozen preconditioner's
iteration count is \emph{exactly} $13$ at every mesh level, matching
the freshly assembled preconditioner to within one iteration
throughout. Since the computations use the straight-sided
discretization of Remark~\ref{rmk:variationalcrime}, this behavior
also persists under the geometric approximation lying outside the
theorem's literal scope.

\begin{table}[t]
  \centering
  \scriptsize
  \caption{Mesh refinement at fixed perturbation (sphere, amplitude
    $30\%$, realization 1, $\kappa=2$). Frozen iteration counts are
    flat in $h$, consistent with the $h$-uniform bound of
    Theorem~\ref{thm:main}, while plain GMRES deteriorates. Last three
    columns: converged true relative GMRES residual.}
  \label{tab:href}
  \begin{tabular}{@{}rrrrrrrr@{}}
    \toprule
    & & & & & \multicolumn{3}{c}{True residual} \\
    $h$ & dof & Plain it. & Frozen it. & Fresh it.
      & plain & frozen & fresh \\
    \midrule
    0.2   & 1224 & 168 & 13 & 13 & 1.0e-8 & 7.5e-8 & 6.6e-8 \\
    0.15  & 2292 & 210 & 13 & 12 & 8.5e-9 & 1.9e-8 & 7.0e-8 \\
    0.1   & 4827 & 281 & 13 & 12 & 9.9e-9 & 3.6e-8 & 5.9e-8 \\
    0.075 & 8172 & 346 & 13 & 12 & 9.9e-9 & 5.4e-7 & 5.4e-7 \\
    \bottomrule
  \end{tabular}
\end{table}

\subsection{Stochastic Galerkin \texorpdfstring{$p$}{p}-refinement}
\label{sec:sgexp}
To test the stochastic-dimension uniformity of
Corollary~\ref{cor:bochner}, we solve the \emph{coupled} stochastic
Galerkin system for the sphere at $h = 0.2$, $\kappa = 2$,
$\varepsilon = 20\%$, with $d = 3$ random modes and total-degree Legendre
chaos spaces
$S_p$, $p = 1,\dots,4$, with $L^2_\rho$-orthonormal basis. The modes are
the $(l,m) = (2,-2), (2,-1), (2,0)$ coefficients of~\eqref{eq:pertformula},
each scaled by $\varepsilon y_i/l^2$ with $\bs y \in [-1,1]^3$ under the
product uniform measure. The
stochastic Galerkin blocks and right-hand side are computed with
tensorized $5$-point Gauss--Legendre quadrature ($125$ nodes, one
DOF-aligned EFIE assembly per node); the coupled operator is applied
matrix-free from the stored nodal matrices. Since $\bs A_h(\varepsilon,\bs
y)$ is non-polynomial in $\bs y$ the rule is not exact, so the computed
system is a quadrature-assembled approximation; repeating $p=4$ with seven
points per parameter ($343$ nodes) gave identical iteration counts and true
residuals agreeing to six significant digits, so quadrature error is
immaterial here. For this experiment $\bs P_0$ is applied exactly, via a
direct factorization of $\bs N_{XY,0}$, to isolate stochastic-discretization
effects from the inexact inner solves used elsewhere.
Table~\ref{tab:sgexp} reports the counts: $I \otimes \bs P_0$ requires
\emph{exactly} $12$ iterations at every chaos degree while the coupled
system grows from $4\,896$ to $42\,840$ unknowns and plain GMRES
deteriorates from $230$ to $279$, the stochastic-dimension-independent
robustness predicted by Corollary~\ref{cor:bochner}.

\begin{table}[t]
  \centering
  \scriptsize
  \caption{Stochastic Galerkin $p$-refinement (sphere, $h=0.2$,
    $\kappa=2$, $\varepsilon=20\%$, $d=3$ random modes, total-degree
    Legendre chaos). Iterations of the block-diagonal frozen
    preconditioner $I \otimes \bs P_0$ are flat in the stochastic
    dimension. Last two columns: converged true relative residual.}
  \label{tab:sgexp}
  \begin{tabular}{@{}rrrrrrr@{}}
    \toprule
    & & & & & \multicolumn{2}{c}{True residual} \\
    $p$ & $\dim S_p$ & SG dofs & Plain it. & $I \otimes \bs P_0$ it.
      & plain & $I \otimes \bs P_0$ \\
    \midrule
    1 & 4  & 4\,896  & 230 & 12 & 1.0e-8 & 2.1e-9 \\
    2 & 10 & 12\,240 & 269 & 12 & 9.9e-9 & 6.9e-9 \\
    3 & 20 & 24\,480 & 276 & 12 & 9.7e-9 & 8.1e-9 \\
    4 & 35 & 42\,840 & 279 & 12 & 9.8e-9 & 1.0e-8 \\
    \bottomrule
  \end{tabular}
\end{table}

\subsection{Wavenumber: higher frequency and mismatch}
\label{sec:kappamismatch}
Repeating the amplitude sweep at $\kappa=4$ (same $h$, mesh,
realization) shows the same qualitative behavior at a higher, more
oscillatory regime: plain GMRES requires $260$--$317$ iterations across
the sweep; frozen requires $22$--$34$; fresh requires $21$--$33$, a
gap of at most one iteration throughout --- comparable to the range at
$\kappa=2$, although the Calder\'on count roughly doubles, so over these two
wavenumbers the freezing penalty does not scale with the difficulty of the solve
(the constants themselves depend on $\kappa$; Remark~\ref{rmk:regularity}).

Freezing the preconditioner in \emph{wavenumber} rather than in
\emph{geometry}, however, behaves differently, as anticipated in
Remark~\ref{rmk:kappa-mismatch}. Fix the preconditioner at
$\kappa_{\mathrm{precond}}=2$ on the nominal, unperturbed sphere and sweep
$\kappa_{\mathrm{solve}}$ over six octaves below it and two above
(Table~\ref{tab:kappamismatch}). The penalty
$\mathrm{iters}_{\mathrm{mismatched}}/\mathrm{iters}_{\mathrm{fresh}}$ is
not symmetric in the two directions. Upward it grows with no sign of an
asymptote, reaching $6.5\times$ at a ratio of $4$. Downward it grows and then
\emph{saturates}, settling near $4.6\times$ over the last three octaves with
the frozen count at $32$--$33$ against a freshly assembled count pinned at
$7$. The plateau appears for every preconditioner wavenumber tested, at a
level fixed by $\kappa_{\mathrm{precond}}$ alone: freezing at
$\kappa_{\mathrm{precond}} = 1, 2, 4$ and solving at
$\kappa_{\mathrm{solve}} = 0.03125$ costs $4.0\times$, $4.6\times$ and
$11.6\times$.
The plateau is the signature of the low-frequency breakdown of the
{EFIE}. As $\kappa \to 0$ the $1/(\iota\kappa)$ weight on the
scalar-potential term of~\eqref{eq:efie} comes to dominate the
vector-potential one. 
The operator itself does not converge in that limit; it diverges, precisely
because of that weight. What converges is $\iota\kappa\,\calT$, which tends to
the scalar-potential term alone, so that $\calT$ differs from a fixed limiting
operator by a divergent scalar factor. The relative residual reduction of
GMRES is invariant under scalar multiplication of the system, and the frozen
preconditioner is fixed, so the iteration count is governed by that limiting
operator and must settle at a value determined by
$\kappa_{\mathrm{precond}}$, which is what Table~\ref{tab:kappamismatch}
records. No such limit exists as $\kappa$ grows, and none is observed. The
same imbalance predicts the
accuracy loss reported below, the two potential contributions being resolved
at very different relative precisions once the weights are so far apart. The
breakdown is classical and is what the loop-star and loop-tree decompositions
were introduced to
cure~\cite{Vecchi1999}; the configuration closest
to ours, loop-star combined with a Calder\'on multiplicative preconditioner,
is treated in~\cite{YanJinNie2010}. We invoke it as a diagnosis rather than
adopt it as a remedy: a loop-star basis addresses the breakdown itself and not
the mismatch penalty at issue here.

Two controls bear on that reading. The interior Maxwell spectrum of the unit
ball begins at $\kappa \approx 2.7437$, so the downward arm is
resonance-free, whereas $\kappa_{\mathrm{solve}} = 4$ and $8$ lie within
$3.2\%$ and $2.3\%$ of an interior eigenvalue; repeating those two solves at
the detuned $\kappa = 3.52$ and $\kappa = 8.4661$ places them on the same
curve. And plain GMRES stays within $256$--$369$ iterations across the whole
sweep, so the penalty does not track a deteriorating unpreconditioned
problem.
Unlike the geometric case, this behavior is characterized here
only numerically. At the bottom of the sweep the binding limitation is
accuracy rather than iteration count: at $\kappa_{\mathrm{solve}} = 0.03125$
the frozen and freshly preconditioned solutions agree only to
$2.8\times10^{-5}$ in relative $\ell^2$ norm (preconditioner frozen at
$\kappa_{\mathrm{precond}} = 4$), against $10^{-8}$ in the mid-range, and we
draw no conclusion there about the attainable solution. This is the
accuracy consequence of the low-frequency breakdown identified above, a
property of the formulation rather than of preconditioner reuse.

\begin{table}[t]
  \centering
  \scriptsize
  \caption{Bidirectional wavenumber mismatch, nominal sphere, $h=0.1$,
    preconditioner frozen at $\kappa_{\mathrm{precond}}=2$. The penalty
    saturates below the matched wavenumber and does not above it. Rows
    marked $^{\dagger}$ are the detuned resonance controls of the text;
    rows marked $^{\ast}$ lie within $5\%$ of an interior Maxwell
    eigenvalue of the unit ball (the spectrum begins at $\kappa \approx
    2.7437$, so every $\kappa_{\mathrm{solve}} \le 2$ is resonance-free).}
  \label{tab:kappamismatch}
  \begin{tabular}{@{}rrrrrr@{}}
    \toprule
    $\kappa_{\mathrm{solve}}$ &
      $\log_2(\kappa_{\mathrm{solve}}/\kappa_{\mathrm{precond}})$ &
      Plain iters & Mismatched iters & Fresh iters & Penalty \\
    \midrule
    0.03125    & $-6$    & 369 & 32  & 7  & 4.6$\times$ \\
    0.0625     & $-5$    & 360 & 33  & 7  & 4.7$\times$ \\
    0.125      & $-4$    & 337 & 32  & 7  & 4.6$\times$ \\
    0.25       & $-3$    & 317 & 28  & 7  & 4.0$\times$ \\
    0.5        & $-2$    & 291 & 24  & 8  & 3.0$\times$ \\
    1          & $-1$    & 275 & 18  & 8  & 2.3$\times$ \\
    2          & \phantom{$-$}0 & 272 & 9   & 9  & 1.0$\times$ \\
    3.52$^{\dagger}$ & \phantom{$-$}0.82 & 297 & 29  & 13 & 2.2$\times$ \\
    4$^{\ast}$ & \phantom{$-$}1 & 256 & 46  & 16 & 2.9$\times$ \\
    8$^{\ast}$ & \phantom{$-$}2 & 353 & 162 & 25 & 6.5$\times$ \\
    8.4661$^{\dagger\ast}$ & \phantom{$-$}2.08 & 305 & 173 & 26 & 6.7$\times$ \\
    \bottomrule
  \end{tabular}
\end{table}

\subsection{Ellipsoid: a non-spherical geometry}
\label{sec:ellipsoid}
Since the sphere is highly symmetric, we repeat the amplitude sweep on a
triaxial ellipsoid, semi-axes $(a,b,c)=(1.5,1.0,0.7)$ (no two equal, so no
residual rotational symmetry beyond the discrete sign-flip group), under the
deformation family of Section~\ref{sec:numerics}. Iteration counts are stable
across the whole sweep (Table~\ref{tab:ellipsoid}): plain GMRES requires
$322$--$325$ iterations while both frozen and fresh Calder\'on
preconditioning require $16$--$17$, a gap of at most one iteration at every
amplitude and zero at $\varepsilon=20\%$. The robustness seen on the sphere
is therefore not a consequence of spherical symmetry. In the convention
of Section~\ref{sec:sphereamp}, the realized displacement at
$\varepsilon=30\%$ reaches $10.28\%$ of the local ellipsoid radius, about
$1.5h$ and $\lambda/20$.
\begin{table}[t]
  \centering
  \scriptsize
  \caption{Triaxial ellipsoid ($a{=}1.5,b{=}1.0,c{=}0.7$), amplitude
    sweep, $\kappa=2$, $h=0.1$ (dof $=4827$, matching the sphere case),
    single realization (seed 1001). All meshes valid (minimum triangle
    area $>0$) throughout. Last three columns: converged true relative
    GMRES residual $\|\bs b-\bs Z\bs u\|/\|\bs b\|$.}
  \label{tab:ellipsoid}
  \begin{tabular}{@{}rrrrrrrr@{}}
    \toprule
    & & & & Frozen & \multicolumn{3}{c}{True residual} \\
    Amplitude & Plain it. & Frozen it. & Fresh it. & $-$ fresh
      & plain & frozen & fresh \\
    \midrule
    3\%  & 325 & 16 & 15 & 1 & 8.7e-9 & 6.0e-8 & 6.1e-8 \\
    10\% & 324 & 16 & 15 & 1 & 8.9e-9 & 6.1e-8 & 6.0e-8 \\
    20\% & 323 & 16 & 16 & 0 & 8.8e-9 & 6.2e-8 & 5.7e-8 \\
    30\% & 322 & 17 & 16 & 1 & 9.8e-9 & 6.3e-8 & 5.4e-8 \\
    \bottomrule
  \end{tabular}
\end{table}

\subsection{Fichera corner geometry: beyond the verified regime}
\label{sec:fichera}
The sphere and ellipsoid are both smooth and convex, and are covered
(up to the geometric variational crime of
Remark~\ref{rmk:variationalcrime}) by the $C^{1,1}$-domain verification
of Assumption~\ref{ass:geocont} provided by
Lemma~\ref{lem:shapepert}. We now step \emph{outside} that verified
regime and repeat the experiment on a \emph{Fichera corner geometry},
the solid $[-1,1]^3 \setminus [0,1]^3$, whose single reentrant corner
makes it both non-convex and non-smooth, so that neither the
$C^{1,1}$ boundary hypothesis of Lemma~\ref{lem:shapepert} nor the
smooth-surface reading of the Calder\'on identity
(Section~\ref{sec:calderon}) applies. The geometry is meshed directly
with Gmsh's OpenCASCADE boolean-subtraction kernel; the resulting
surface mesh is watertight, as verified from its Euler characteristic
and edge incidence.

The perturbation also differs: rather than a global spherical-harmonic
radial field, we apply a spatially \emph{localized} bump/dent to two of the
cube's nine flat faces ($x=-1$ and $y=-1$), displaced along each face's
outward normal by $\varepsilon\cos^2(\pi y/2)\cos^2(\pi z/2)$ and
$\varepsilon\cos^2(\pi x/2)\cos^2(\pi z/2)$. The taper vanishes exactly at
each perturbed face's own boundary, so vertices shared with neighbouring
faces --- including the reentrant corner itself --- never move, mesh
connectivity is identical between nominal and perturbed meshes, and the
DOF-alignment strategy of Section~\ref{sec:implementation} applies
unchanged. The two faces were chosen away from the reentrant corner so that
the perturbation's effect separates cleanly from the geometry's intrinsic
singularity.

Fixing $\kappa=2$, $h=0.3$ (dof$\,{=}\,1212$, a coarser discretization
than the sphere or ellipsoid studies, adequate for this proof-of-concept
third geometry), a sweep of the bump amplitude from $2\%$ to $15\%$ of
the cube's half-width returns a constant $197$ plain iterations and
\emph{exactly} $26$ for both the frozen and the freshly assembled
preconditioner at every amplitude, a zero gap throughout, with true
residuals near $6\times10^{-8}$ in all three solves. The largest realized
displacement is $0.143$, that is $0.48h$ and about $\lambda/22$ here: this
is the only one of the three sweeps whose displacement stays below a single
mesh width, which should be kept in mind when reading that zero gap.
The Fichera corner is Lipschitz but not $C^{1,1}$, its edges and
reentrant corner leaving the normal field discontinuous. It therefore lies
outside the verified range of Assumption~\ref{ass:geocont}, and
Theorem~\ref{thm:main} does not apply.
Refining to $h = 0.15$ ($4170$ unknowns) changes nothing: $289$ plain
iterations and $26$ frozen $=$ $26$ fresh at every amplitude, with the
minimum triangle area unchanged at $5.61\times10^{-3}$. A perturbation
confined to a flat face, away from the reentrant corner, does not interact
with the geometric singularity at all, and the zero gap is evidence about
the perturbation rather than about the geometry.

We therefore repeated the experiment with the perturbation moved onto the
singularity. The three reentrant notch faces $\{x=0\}$, $\{y=0\}$ and
$\{z=0\}$, which meet pairwise along the reentrant edges and all three at
the reentrant corner, are displaced along their outward normals by
$\varepsilon\,g(u)\,g(v)$ in the two in-face coordinates, with
$g(t) = \cos^2(\pi t/2)$. Since $g(0)=1$ and $g(1)=g'(1)=0$, the
displacement vanishes with vanishing slope where each notch meets an outer
face, leaving the six outer faces exactly flat, and is \emph{maximal at the
reentrant corner}, which moves by $\sqrt3\,\varepsilon$ along the body
diagonal. A vertex lying on more than one notch face receives the vector
sum of the corresponding contributions, each along that face's own outward
normal, so its displacement is single-valued: a vertex on a reentrant edge
moves diagonally, in the plane spanned by the two normals, and the dihedral
angle along that edge therefore changes with $\varepsilon$. Connectivity is
preserved exactly as before, and the mesh,
wavenumber and discretization are those of the control just described, so
the two sweeps differ only in where the perturbation acts.

Table~\ref{tab:ficheracorner} reports the outcome, and it differs in kind.
The frozen--fresh gap, zero at every amplitude on the flat faces, grows to
$1$, $4$, $13$ and $23$ iterations, and the ratio of frozen to fresh counts
grows monotonically from $1.04$ to $1.51$. The contrast with the smooth
geometries is sharpest at matched realized displacement: the sphere's
largest case displaces $1.71h$, or $\lambda/18$, for a gap of at most two
iterations, whereas here the gap is $13$ at $\lambda/18$ and $23$ at
$1.73h$.

Mesh degradation and operator perturbation both act here, and the two must
be separated. The cleanest evidence is the $\varepsilon=5\%$ row, where the
mesh is only mildly affected: the shape-regularity ratio rises from $2.29$
to $3.05$ and the smallest angle from $42.7^\circ$ to $31.8^\circ$, values
unremarkable for a surface mesh, plain GMRES moves by five iterations
($296$ against $291$), and the freshly assembled preconditioner is
unchanged at $25$. The problem, in other words, is not yet appreciably
harder. Yet the frozen gap is already $4$, against $0$ at the same
amplitude in the flat-face control. That gap is not attributable to element
quality.

At $\varepsilon=10\%$ and $15\%$ the two effects are genuinely confounded:
the smallest angle falls to $10.0^\circ$ and then $2.8^\circ$, with
$\sigma_{\max}$ reaching $33$, and plain GMRES
and the fresh preconditioner both deteriorate, to $446$ and $45$
iterations. The minimum triangle area is the wrong diagnostic here, and
misleadingly so: it \emph{rises} from $1.5\times10^{-4}$ to
$2.0\times10^{-4}$ between those two amplitudes, while every shape measure
worsens. What still separates them is the \emph{ratio} of frozen to
fresh counts, which rises monotonically from $1.04$ to $1.51$: a difficulty
shared by both strategies moves the two counts together and leaves the
ratio near unity, whereas freezing costs progressively more. The observed
deterioration is consistent with the failure of a small operator-norm
perturbation bound at a moving reentrant corner --- the behaviour controlled
by Assumption~\ref{ass:geocont} and for which Lemma~\ref{lem:shapepert}
provides the required estimate only on $C^{1,1}$ boundaries.

Read positively, this locates what Assumption~\ref{ass:geocont} controls.
Reuse is robust on the smooth geometries the theory covers; it is equally
robust on a geometry the theory does \emph{not} cover, at every amplitude
tested, so long as the perturbation leaves the singularity untouched; and,
in this sweep, it degrades monotonically once the perturbation acts at the
singularity itself. Taken together, these experiments indicate that the
obstruction is not non-smoothness of the scatterer alone, but perturbation of
a \emph{reentrant} singularity --- a sharper empirical distinction than the
$C^{1,1}$ hypothesis itself makes, and one that suggests what a future
extension would need to track. Establishing
Assumption~\ref{ass:geocont} for Lipschitz or polyhedral Maxwell boundaries
remains open (Section~\ref{sec:conclusions}), and these counts suggest that
any such result will have to track the corner explicitly.

\begin{table}[t]
  \centering
  \scriptsize
  \caption{Fichera corner geometry, perturbation applied \emph{at} the
    reentrant corner (the three notch faces $x=0$, $y=0$, $z=0$; amplitude
    $\varepsilon$ expressed as a percentage of the cube's half-width, the
    corner itself moving by $\sqrt3\,\varepsilon$), $\kappa=2$,
    $h=0.15$ (dof $=4170$). The matched flat-face control at the same $h$
    and the same dof gives $289$ plain and $26$ frozen $=$ $26$ fresh
    iterations at every amplitude, a zero gap throughout. All meshes remain
    valid. $d_{\mathrm{c}}$ is the measured corner displacement;
    $\theta_{\min}$ is the smallest interior angle over all triangles and
    $\sigma_{\max}$ the largest ratio of longest edge to twice the in-radius,
    which is $\sqrt3$ for an equilateral triangle and diverges as an element
    degenerates. The nominal mesh has $\theta_{\min}=42.7^\circ$ and
    $\sigma_{\max}=2.29$.}
  \label{tab:ficheracorner}
  \begin{tabular}{@{}rrrrrrrrr@{}}
    \toprule
    & & & & & & & Frozen & \\
    $\varepsilon$ & $d_{\mathrm{c}}$ & in $h$ & $\theta_{\min}$ &
      $\sigma_{\max}$ & Plain it.
      & Frozen it. & Fresh it. & $-$ fresh \\
    \midrule
    2\%  & 0.035 & 0.23 & $40.6^\circ$ & 2.35 & 291 & 26 & 25 & 1 \\
    5\%  & 0.087 & 0.58 & $31.8^\circ$ & 3.05 & 296 & 29 & 25 & 4 \\
    10\% & 0.173 & 1.15 & $10.0^\circ$ & 8.63 & 327 & 42 & 29 & 13 \\
    15\% & 0.260 & 1.73 & $\phantom{0}2.8^\circ$ & 33.09 & 446 & 68 & 45 & 23 \\
    \bottomrule
  \end{tabular}
\end{table}

\subsection{NASA almond: a Monte Carlo shape-uncertainty campaign}
\label{sec:almond}

The geometries above are small and smooth, or small and singular. The NASA
almond of Woo et al.~\cite{WooWangSchuhSanders1993} is neither. It is a
standard radar-cross-section benchmark: convex, strongly non-uniform, with a
rounded nose at one end and a sharp tip at the other, and a maximum
half-thickness of $16.26$\,mm against a total length $d = 9.936$\,in
$=0.252374$\,m. We take $d = 3\lambda$, so that $\lambda = 84.125$\,mm,
$f = 3.564$\,GHz and $\kappa d = 18.85$, by a wide margin the electrically
largest target in the paper. Its surface is meshed at
$h = \lambda/12$ ($893$ vertices, $1782$ faces, $2673$ RWG unknowns) and, for
the refinement check below, at $h = \lambda/20$ ($7269$ unknowns).\footnote{%
The published profile's two arcs do not quite meet at the mid-station,
leaving a radial jump of $5\times10^{-6}d \approx 1.3\,\mu$m --- an artifact
of rounding in the published constants, four orders of magnitude below the
mesh width.}

This experiment draws its shapes at random, so admissibility in the sense of
Section~\ref{sec:shapeperts} must be established rather than assumed. We
therefore perturb by that class itself, $\bs\varphi_{\bs y}(\bs x) = \bs x +
\varepsilon\bs V(\bs x)$ with $\bs V$ a vector-valued random field on $\R^3$
--- three independent squared-exponential components of correlation length
$\ell$, realized by random Fourier features --- rather than by displacing
vertices along surface normals. Two things follow. First, $\bs V$ is analytic
with a closed-form Jacobian, so $\bs\varphi_{\bs y}$ is injective on any convex
set as
soon as $\varepsilon\sup|\mathrm D\bs V|_2 < 1$; we impose
$\varepsilon\sup|\mathrm D\bs V|_2 \le \tfrac12$, evaluating the supremum on
a refined grid over the bounding box of $\Gnom$. Validity is therefore
screened \emph{before} a shape is drawn: no realization is ever discarded
and the sample carries no rejection bias, whereas rejecting self-intersecting
draws after the fact would condition the sample on an event correlated with
the amplitude. Second, the sharp tip becomes a variable rather than a given:
case~U applies $\bs V$ unchanged, so the tip moves with its neighbourhood,
while case~T multiplies it by a $C^1$ cutoff vanishing within $d/10$ of the
tip --- this geometry's version of the question
Section~\ref{sec:fichera} asks at the reentrant corner.
Figure~\ref{fig:mesh-comparison} shows one draw at the largest amplitude
beside the other three geometries.

At $\ell = d/4$ the bound admits $\varepsilon \le 13.30$\,mm over the $110$
fields drawn, and the five amplitude levels are $\varepsilon = 2.66$, $5.32$,
$7.98$, $10.64$ and $13.30$\,mm, that is $\lambda/31.6$ down to
$\lambda/6.3$, or $0.38h$ to $1.90h$, or $16.4\%$ to $81.8\%$ of the seam
half-thickness, exceeding in mesh widths both the sphere at $50\%$ ($1.71h$)
and the Fichera corner at $15\%$ ($1.73h$). The correlation length is a lever
on that ceiling: a longer $\ell$ admits a larger $\varepsilon$ because
$\sup|\mathrm D\bs V|$ falls, so amplitude is bought with a smoother field,
which is also the easier field for the preconditioner. We therefore report
$\ell = d/10$ alongside, at the two levels its own screening admits. All
$243$ geometries below pass a triangle--triangle intersection test, retained
as a check on the implementation rather than as a filter.

Table~\ref{tab:almondtiming} shows the same cost structure as on the sphere,
giving $17.4\times$ measured, $22.9\times$ artifact-corrected and
$27.4\times$ assembly-only for the three figures of
Section~\ref{sec:numanalysis}. These are smaller than the sphere's because
$t_T/t_Z = 27.9$ here against $37$ there: the mechanism is the cost model's,
the factor is geometry- and mesh-dependent.

\begin{table}[t]
  \centering
  \caption{Per-matrix assembly time on the NASA almond,
    $\kappa = 74.69\,\mathrm{m}^{-1}$, $h = \lambda/12$ (dof $=2673$).
    Compare Table~\ref{tab:componenttiming}.}
  \label{tab:almondtiming}
  \begin{tabular}{lrr}
    \toprule
    Matrix & Time (s) & Share \\
    \midrule
    $\bs Z_{XX}$ (primal EFIE) & 7.32 & 3.4\% \\
    $\bs T_{YY}$ (dual, barycentric) & 204.03 & 95.6\% \\
    $\bs N_{XY}$ (duality pairing) & 1.51 & 0.7\% \\
    $\bs b_X$ & 0.46 & 0.2\% \\
    \midrule
    Total & 213.32 & \\
    Frozen, per realization ($\bs Z_{XX} + \bs b_X$) & 7.78 & \\
    \bottomrule
  \end{tabular}
\end{table}

\begin{table}[t]
  \centering
  \caption{NASA almond amplitude sweep, $h=\lambda/12$, $\ell = d/4$, ten
    random fields per cell, case~U. Iteration counts are medians over the
    cell, with the range in brackets; the fresh column is over the samples
    per cell solved both ways --- three at levels~1--3, five at levels~4
    and~5, where the cell's two hardest draws were solved both ways as well.
    Case~T differs nowhere by more than one iteration in the frozen count and
    is discussed below rather than tabulated.}
  \label{tab:almondsweep}
  \begin{tabular}{lrrrrr}
    \toprule
    & $\varepsilon$ & & \multicolumn{3}{c}{GMRES iterations} \\
    \cmidrule(lr){4-6}
    Level & (mm) & $\varepsilon\sup|\mathrm D\bs V|$
      & Plain & Frozen & Fresh \\
    \midrule
    1 & 2.7 & 0.05--0.09 & 418 [409--425] & 48 [47--49] & 47 [47--48] \\
    2 & 5.3 & 0.10--0.19 & 420 [402--436] & 48 [47--50] & 47 [47--49] \\
    3 & 8.0 & 0.15--0.28 & 422 [396--453] & 49 [48--53] & 48 [47--50] \\
    4 & 10.6 & 0.20--0.37 & 424 [395--510] & 50 [48--59] & 51 [47--64] \\
    5 & 13.3 & 0.24--0.47 & 429 [401--473] & 51 [48--55] & 51 [47--60] \\
    \bottomrule
  \end{tabular}
\end{table}

Table~\ref{tab:almondsweep} and Figure~\ref{fig:almondsweep} cover a hundred
samples, fifty in each of cases~U and~T. Across them the frozen-to-fresh
ratio stays within $0.92$--$1.04$ with median exactly $1.00$ over the $38$
solved both ways ($19$ per case), with no systematic amplitude trend; frozen is the cheaper
on $20$ of the $38$. Median counts change little while the maximum rises from
$49$ to $59$, so increasing amplitude primarily lengthens the upper tail.

That tail is the geometry, not the preconditioner. The six samples with the
largest frozen counts are exactly the six with the largest plain counts,
correlated at $0.86$, and the frozen-to-plain ratio is $0.117$ with standard
deviation $0.003$ --- an $8.6$-fold reduction delivered sample by sample
rather than on average. Solving the two hardest draws of each cell at
levels~4 and~5 with a freshly assembled preconditioner confirms it: on all
eight, four per case, the frozen operator was the cheaper, the sweep's worst sample needing
$59$ frozen iterations against $64$ fresh. No sample was flagged as a
resonance candidate, and none failed to converge.

Shortening the correlation length from $d/4$ to $d/10$ leaves this unchanged:
over the two levels its screening admits, the rougher field costs at most
one additional iteration at equal amplitude, consistent with a preconditioner
that responds to the size of the deformation rather than to its spectral
content.

\begin{figure}[t]
  \centering
  \includegraphics[width=\textwidth]{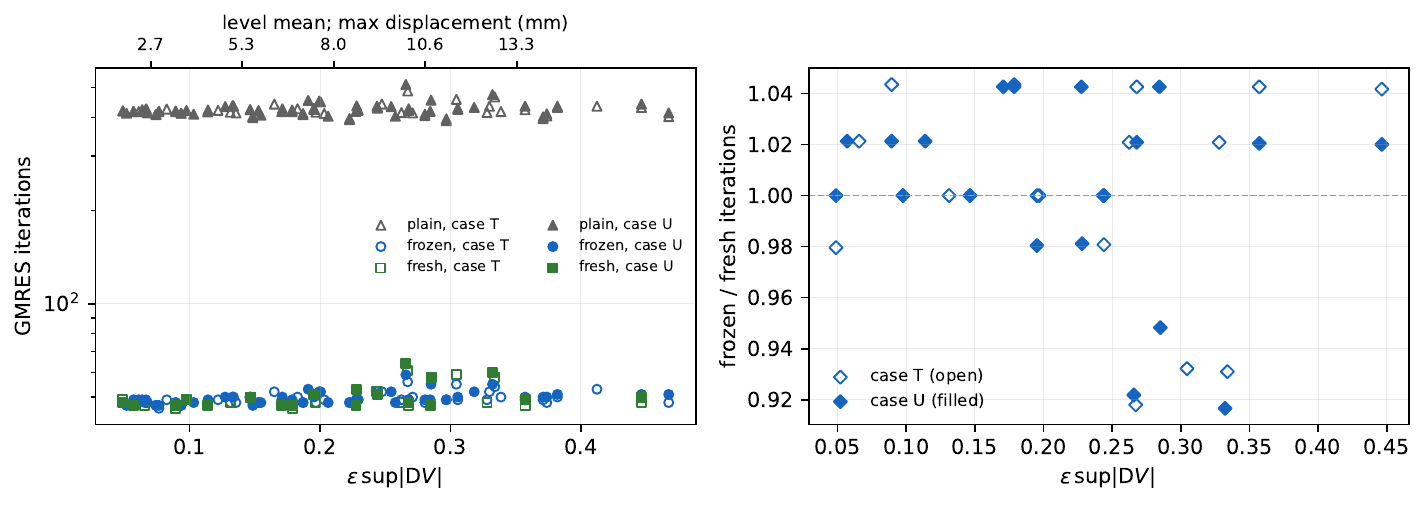}
  \caption{NASA almond amplitude sweep, $h = \lambda/12$, $\ell = d/4$, the
    hundred samples of Table~\ref{tab:almondsweep}. The abscissa is the
    injectivity screening quantity $\varepsilon\sup|\mathrm D\bs V|$, the one
    the perturbation theory is written in, which varies from draw to draw at
    a fixed level; the realized maximum displacement runs along the top axis.
    (a)~iteration counts, plain against frozen against fresh; (b)~the
    frozen-to-fresh ratio, cases~U and~T separately, flat in that quantity
    across its whole admissible range.}
  \label{fig:almondsweep}
\end{figure}

At the top amplitude, $\varepsilon = 13.3$\,mm, one hundred independent
fields were drawn and solved frozen (Figure~\ref{fig:almondcampaign}): median
$51$, range $46$--$58$, standard deviation $2.3$, all converged to the
$10^{-8}$ tolerance, and on the twenty also solved fresh the frozen-to-fresh
ratio is $0.93$--$1.11$ with median $1.02$; no realization was discarded.
That the ratio falls below one on part of the sample is the substantive
point: at this amplitude freezing is a perturbation of either sign about
unity, not a controlled degradation.

\begin{figure}[t]
  \centering
  \includegraphics[width=\textwidth]{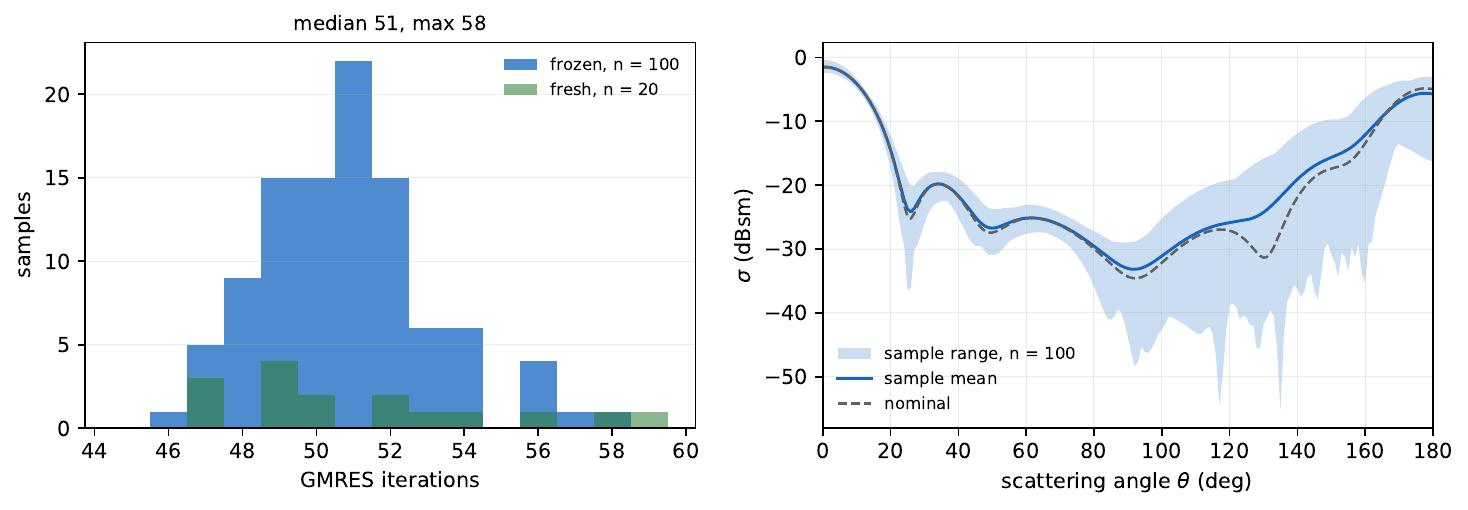}
  \caption{NASA almond Monte Carlo campaign, one hundred independent fields
    at $\varepsilon = 13.3$\,mm $= \lambda/6.3$. (a)~the distribution of
    frozen GMRES iteration counts, with the nominal count and the freshly
    assembled subset marked. (b)~the bistatic radar cross section: the
    nominal cut against the sample envelope over the campaign.}
  \label{fig:almondcampaign}
\end{figure}

Cases~U and~T are indistinguishable: their median frozen counts differ by at
most one iteration at every amplitude, including the top level, where the tip
moves by $4.90$\,mm under~U and is fixed under~T. With
Section~\ref{sec:fichera}, this favours reentrancy rather than
non-smoothness as such as what makes that geometry the harder of the two.

At maximum amplitude, refinement from $2673$ to $7269$ unknowns raises the
unpreconditioned median from $429$ to $617$ while leaving the frozen count
essentially unchanged ($51$ to $52$; frozen-to-fresh ratio $0.98$--$1.06$).
The mesh-independent behaviour targeted by Calder\'on preconditioning
therefore survives freezing, and the advantage in iteration count grows with
refinement, from $8.4\times$ to $11.9\times$.

\begin{figure}[t]
  \centering
  \includegraphics[width=\textwidth]{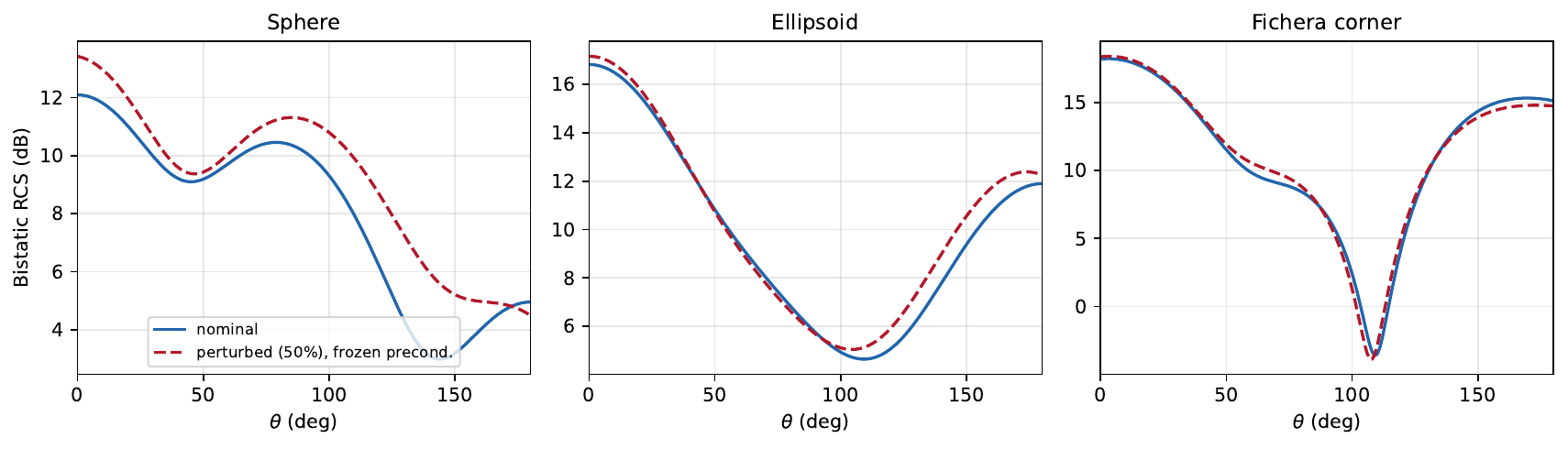}
  \caption{Bistatic radar cross section, nominal against perturbed,
    computed from each converged current with the frozen preconditioner:
    sphere at $50\%$, ellipsoid at $30\%$, Fichera corner at $15\%$. The
    perturbation changes the far field by several dB on the first two,
    while the Fichera curves barely separate, as a perturbation confined to
    two flat faces should not.}
  \label{fig:rcs}
\end{figure}

Two questions should be separated. Reuse does not change the converged
solution: solving one perturbed geometry both ways gives currents agreeing to
$\|\bs u_{\mathrm{frozen}}-\bs u_{\mathrm{fresh}}\|/\|\bs u_{\mathrm{fresh}}\|
= 1.9\times10^{-8}$, with the residual columns of
Tables~\ref{tab:sphereamp}, \ref{tab:ellipsoid} and~\ref{tab:ficheracorner}
the direct algebraic check. The perturbation is nonetheless large enough to
matter physically, separating nominal from perturbed far fields by several dB
(Figure~\ref{fig:rcs}); monostatically, over $73$ azimuths at both
polarizations, it moves the radar cross section by a median of $2.2$\,dB and
by as much as $14$\,dB, while frozen and fresh agree to within one iteration
at every angle, $54$ against $53$ at the median.

This is also the one place where the frozen strategy's advantage is at its
weakest. For this single-geometry, $73$-angle sweep the frozen solves cost
$311$\,s against $123$\,s for fresh solves, while fresh assembly adds
$204$\,s: $311$\,s against $327$\,s overall, effectively break-even. The
crossover is near $70$ right-hand sides in the current implementation and
near $200$ with the storage artifact of Section~\ref{sec:campaign} removed.
Freezing therefore targets many geometries with few excitations each rather
than one geometry with many. What such a comparison can
show is bounded by what Theorem~\ref{thm:main} is about: the bound is on the
solver, and an observable quadratic in the solution inherits it only through
the solution error, so these figures exhibit that inheritance rather than
prove it.

\section{Conclusions and outlook}
\label{sec:conclusions}
We have shown that a Calder\'on preconditioner for the EFIE, assembled once
at a nominal geometry, can be reused across an admissible family of
shape-perturbed problems. The spectral-conditioning bound is uniform in the
spatial discretization (Theorem~\ref{thm:main}) and, after tensorization, in
the stochastic approximation dimension (Corollary~\ref{cor:bochner}); the
GMRES rate additionally requires the coercivity-type hypothesis
of~\cite[Assumption~4]{EscapilInchauspeJerezHanckes2021}, which we do not
establish here. The transport itself is exact: fixed connectivity makes the
degree-of-freedom correspondence a permutation, so the duality pairing is
carried over without approximation (Lemma~\ref{lem:nxy-exact}).
Numerically, on the straight-sided discretizations of
Remark~\ref{rmk:variationalcrime}, frozen preconditioning closely tracks
freshly assembled Calder\'on preconditioning under shape, mesh and stochastic
refinement on the smooth geometries, and the NASA almond supplies a
realistic, electrically large validation of the same behaviour
(Section~\ref{sec:almond}). In these experiments the frozen
preconditioner deteriorates when a reentrant singularity is moved, but not
under a large deformation or a sharp feature alone
(Table~\ref{tab:ficheracorner}). Wavenumber reuse behaves differently and is
not covered by the shape-perturbation result
(Section~\ref{sec:kappamismatch}).

Practically, freezing amortizes the dual barycentric assembly $\bs T_{YY}$,
which dominates assembly cost, at a measured $29$--$31\times$ per new
realization and roughly twofold against published compressed-assembly
timings~\cite{EscapilInchauspeJerezHanckes2019,FierroJerezHanckes2020}.
These factors are implemen-tation-dependent; the transferable result is the
amortization mechanism quantified in
Section~\ref{sec:numanalysis}.

Three limitations delimit the result. First, the DOF alignment of Section~\ref{sec:implementation}
presupposes a mesh-morphing perturbation model, so a realization requiring a
topology change, a quality-driven remesh, or a deformation too large for the
nominal connectivity admits no permutation and falls outside the strategy
altogether; this bears on the sampling pipelines
of~\cite{AylwinJerezHanckesSchwabZech2020,
AylwinJerezHanckesSchwabZech2023} whenever mesh quality forces a remesh.
Second, the admissible perturbation window narrows with frequency, since
$C_{\mathrm{geo}}$ and the stability constants entering $K_\star$ both depend
on $\kappa$; the two wavenumbers reported here do not quantify that
dependence. Third, the gain is in assembly and therefore favours many
geometries with few excitations each; the multi-right-hand-side experiment of
Section~\ref{sec:almond} reaches break-even for a single-geometry sweep.

Three directions remain open. First, the geometric hypothesis:
$C_{\mathrm{geo}}$ is not tracked explicitly in $\kappa$ and the nominal
curvature, the surface-lifting analysis of Remark~\ref{rmk:variationalcrime}
is needed to bring the assembled matrices under the exact-surface theorem,
and Assumption~\ref{ass:geocont} remains open for merely Lipschitz or
polyhedral boundaries, where~\cite{DoelzHenriquez2024} suggests a route that
would bring the Fichera-type geometries inside the result's scope. Second,
the reuse principle of Section~\ref{sec:oppg} is not tied to the EFIE:
forthcoming work will address PMCHWT-type
systems~\cite{KleanthousEtAl2022}, local multiple trace
formulations~\cite{HiptmairJerezHanckes2012}, where a perturbation confined
to a subset of interfaces requires refreshing only the affected Calder\'on
blocks, and OSRC-type operator
preconditioners~\cite{FierroPiccardoBetcke2023}; for wavenumber sweeps,
pre-positioning several preconditioners in the spirit
of~\cite{VanHartenScarabosio2026} is a natural complement. Third, assessing
end-to-end savings in Monte Carlo and multilevel
pipelines~\cite{AylwinJerezHanckesSchwabZech2020,
AylwinJerezHanckesSchwabZech2023} and in the tensorized moment systems of
first-order sparse BEM~\cite{EscapilInchauspeJerezHanckes2024} remains future
work.

\section*{Code and data availability}
The Julia implementation is openly available under the MIT licence at
\url{https://github.com/cfjerez-sci/ReusingCalderon}. 


\bibliography{references}

\begin{thebibliography}{10}

\bibitem{AdrianAndriulliEibert2019}
{\sc S.~B. Adrian, F.~P. Andriulli, and T.~F. Eibert}, {\em On a
  refinement-free {C}alder\'on multiplicative preconditioner for the electric
  field integral equation}, Journal of Computational Physics, 376 (2019),
  pp.~1232--1252,
  \href{http://dx.doi.org/10.1016/j.jcp.2018.10.009}{doi:\nolinkurl{10.1016/j.jcp.2018.10.009}}.

\bibitem{AndriulliEtAl2008}
{\sc F.~P. Andriulli, K.~Cools, H.~Ba{\u g}c{\i}, F.~Olyslager, A.~Buffa, S.~H.
  Christiansen, and E.~Michielssen}, {\em A multiplicative {C}alder\'on
  preconditioner for the electric field integral equation}, IEEE Transactions
  on Antennas and Propagation, 56 (2008), pp.~2398--2412,
  \href{http://dx.doi.org/10.1109/TAP.2008.926788}{doi:\nolinkurl{10.1109/TAP.2008.926788}}.

\bibitem{AylwinJerezHanckesSchwabZech2020}
{\sc R.~Aylwin, C.~Jerez-Hanckes, C.~Schwab, and J.~Zech}, {\em Domain
  uncertainty quantification in computational electromagnetics}, SIAM/ASA
  Journal on Uncertainty Quantification, 8 (2020), pp.~301--341,
  \href{http://dx.doi.org/10.1137/19M1239374}{doi:\nolinkurl{10.1137/19M1239374}}.

\bibitem{AylwinJerezHanckesSchwabZech2023}
{\sc R.~Aylwin, C.~Jerez-Hanckes, C.~Schwab, and J.~Zech}, {\em Multilevel
  domain uncertainty quantification in computational electromagnetics},
  Mathematical Models and Methods in Applied Sciences, 33 (2023), pp.~877--921,
  \href{http://dx.doi.org/10.1142/S0218202523500264}{doi:\nolinkurl{10.1142/S0218202523500264}}.

\bibitem{BenziBertaccini2003}
{\sc M.~Benzi and D.~Bertaccini}, {\em Approximate inverse preconditioning for
  shifted linear systems}, BIT Numerical Mathematics, 43 (2003), pp.~231--244.

\bibitem{BuffaChristiansen2007}
{\sc A.~Buffa and S.~H. Christiansen}, {\em A dual finite element complex on
  the barycentric refinement}, Mathematics of Computation, 76 (2007),
  pp.~1743--1769,
  \href{http://dx.doi.org/10.1090/S0025-5718-07-01965-5}{doi:\nolinkurl{10.1090/S0025-5718-07-01965-5}}.

\bibitem{BuffaCostabelSheen2002}
{\sc A.~Buffa, M.~Costabel, and D.~Sheen}, {\em On traces for
  {$\mathbf{H}(\mathrm{curl},\Omega)$} in {L}ipschitz domains}, Journal of
  Mathematical Analysis and Applications, 276 (2002), pp.~845--867,
  \href{http://dx.doi.org/10.1016/S0022-247X(02)00455-9}{doi:\nolinkurl{10.1016/S0022-247X(02)00455-9}}.

\bibitem{BuffaHiptmair2003}
{\sc A.~Buffa and R.~Hiptmair}, {\em Galerkin boundary element methods for
  electromagnetic scattering}, in Topics in Computational Wave Propagation,
  vol.~31 of Lecture Notes in Computational Science and Engineering, Springer,
  Berlin, 2003, pp.~83--124.

\bibitem{ChristiansenNedelec2002}
{\sc S.~H. Christiansen and J.-C. N{\'e}d{\'e}lec}, {\em A preconditioner for
  the electric field integral equation based on {C}alder\'on formulas}, SIAM
  Journal on Numerical Analysis, 40 (2002), pp.~1100--1135,
  \href{http://dx.doi.org/10.1137/S0036142901388731}{doi:\nolinkurl{10.1137/S0036142901388731}}.

\bibitem{CostabelLeLouer2012a}
{\sc M.~Costabel and F.~Le~Lou{\"e}r}, {\em Shape derivatives of boundary
  integral operators in electromagnetic scattering. {P}art {I}: {S}hape
  differentiability of pseudo-homogeneous boundary integral operators},
  Integral Equations and Operator Theory, 72 (2012), pp.~509--535,
  \href{http://dx.doi.org/10.1007/s00020-012-1954-z}{doi:\nolinkurl{10.1007/s00020-012-1954-z}}.

\bibitem{CostabelLeLouer2012b}
{\sc M.~Costabel and F.~Le~Lou{\"e}r}, {\em Shape derivatives of boundary
  integral operators in electromagnetic scattering. {P}art {II}: {A}pplication
  to scattering by a homogeneous dielectric obstacle}, Integral Equations and
  Operator Theory, 73 (2012), pp.~17--48,
  \href{http://dx.doi.org/10.1007/s00020-012-1955-y}{doi:\nolinkurl{10.1007/s00020-012-1955-y}}.

\bibitem{DoelzHenriquez2024}
{\sc J.~D{\"o}lz and F.~Henr{\'i}quez}, {\em Parametric shape holomorphy of
  boundary integral operators with applications}, SIAM Journal on Mathematical
  Analysis, 56 (2024), pp.~6731--6767,
  \href{http://dx.doi.org/10.1137/23M1576451}{doi:\nolinkurl{10.1137/23M1576451}}.

\bibitem{EscapilInchauspeJerezHanckes2019}
{\sc P.~Escapil-Inchausp{\'e} and C.~Jerez-Hanckes}, {\em Fast {C}alder\'on
  preconditioning for the electric field integral equation}, IEEE Transactions
  on Antennas and Propagation, 67 (2019), pp.~2555--2564,
  \href{http://dx.doi.org/10.1109/TAP.2019.2891129}{doi:\nolinkurl{10.1109/TAP.2019.2891129}}.

\bibitem{EscapilInchauspeJerezHanckes2021}
{\sc P.~Escapil-Inchausp{\'e} and C.~Jerez-Hanckes}, {\em Bi-parametric
  operator preconditioning}, Computers \& Mathematics with Applications, 102
  (2021), pp.~220--232,
  \href{http://dx.doi.org/10.1016/j.camwa.2021.10.012}{doi:\nolinkurl{10.1016/j.camwa.2021.10.012}}.

\bibitem{EscapilInchauspeJerezHanckes2024}
{\sc P.~Escapil-Inchausp{\'e} and C.~Jerez-Hanckes}, {\em Shape uncertainty
  quantification for electromagnetic wave scattering via first-order sparse
  boundary element approximation}, IEEE Transactions on Antennas and
  Propagation, 72 (2024), pp.~6627--6637,
  \href{http://dx.doi.org/10.1109/TAP.2024.3418199}{doi:\nolinkurl{10.1109/TAP.2024.3418199}}.

\bibitem{EscapilInchauspeJerezHanckes2026}
{\sc P.~Escapil-Inchausp{\'e} and C.~Jerez-Hanckes}, {\em Shape holomorphy and
  sparse approximation of the {M}axwell electric field integral operator}.
\newblock Preprint, submitted to Mathematical Models and Methods in Applied
  Sciences, 2026, \href{http://arxiv.org/abs/2609.00466}{arXiv:2609.00466
  [math.NA]}.

\bibitem{FierroJerezHanckes2020}
{\sc I.~Fierro and C.~Jerez-Hanckes}, {\em Fast {C}alder\'on preconditioning
  for {H}elmholtz boundary integral equations}, Journal of Computational
  Physics, 409 (2020), p.~109355,
  \href{http://dx.doi.org/10.1016/j.jcp.2020.109355}{doi:\nolinkurl{10.1016/j.jcp.2020.109355}}.

\bibitem{FierroPiccardoBetcke2023}
{\sc I.~Fierro-Piccardo and T.~Betcke}, {\em An {OSRC} preconditioner for the
  {EFIE}}, IEEE Transactions on Antennas and Propagation, 71 (2023),
  pp.~3408--3417,
  \href{http://dx.doi.org/10.1109/TAP.2023.3236762}{doi:\nolinkurl{10.1109/TAP.2023.3236762}}.

\bibitem{GrahamPemberySpence2021}
{\sc I.~G. Graham, O.~R. Pembery, and E.~A. Spence}, {\em Analysis of a
  {H}elmholtz preconditioning problem motivated by uncertainty quantification},
  Advances in Computational Mathematics, 47 (2021), p.~68,
  \href{http://dx.doi.org/10.1007/s10444-021-09889-0}{doi:\nolinkurl{10.1007/s10444-021-09889-0}}.

\bibitem{Hiptmair2006}
{\sc R.~Hiptmair}, {\em Operator preconditioning}, Computers \& Mathematics
  with Applications, 52 (2006), pp.~699--706,
  \href{http://dx.doi.org/10.1016/j.camwa.2006.10.008}{doi:\nolinkurl{10.1016/j.camwa.2006.10.008}}.

\bibitem{HiptmairJerezHanckes2012}
{\sc R.~Hiptmair and C.~Jerez-Hanckes}, {\em Multiple traces boundary integral
  formulation for {H}elmholtz transmission problems}, Advances in Computational
  Mathematics, 37 (2012), pp.~39--91,
  \href{http://dx.doi.org/10.1007/s10444-011-9194-3}{doi:\nolinkurl{10.1007/s10444-011-9194-3}}.

\bibitem{KleanthousEtAl2022}
{\sc A.~Kleanthous, T.~Betcke, D.~P. Hewett, P.~Escapil-Inchausp{\'e},
  C.~Jerez-Hanckes, and A.~J. Baran}, {\em Accelerated {C}alder\'on
  preconditioning for {M}axwell transmission problems}, Journal of
  Computational Physics, 458 (2022), p.~111099,
  \href{http://dx.doi.org/10.1016/j.jcp.2022.111099}{doi:\nolinkurl{10.1016/j.jcp.2022.111099}}.

\bibitem{ParksEtAl2006}
{\sc M.~L. Parks, E.~de~Sturler, G.~Mackey, D.~D. Johnson, and S.~Maiti}, {\em
  Recycling {K}rylov subspaces for sequences of linear systems}, SIAM Journal
  on Scientific Computing, 28 (2006), pp.~1651--1674.

\bibitem{PowellElman2009}
{\sc C.~E. Powell and H.~C. Elman}, {\em Block-diagonal preconditioning for
  spectral stochastic finite-element systems}, IMA Journal of Numerical
  Analysis, 29 (2009), pp.~350--375,
  \href{http://dx.doi.org/10.1093/imanum/drn014}{doi:\nolinkurl{10.1093/imanum/drn014}}.

\bibitem{Saad1993}
{\sc Y.~Saad}, {\em A flexible inner-outer preconditioned {GMRES} algorithm},
  SIAM Journal on Scientific Computing, 14 (1993), pp.~461--469,
  \href{http://dx.doi.org/10.1137/0914028}{doi:\nolinkurl{10.1137/0914028}}.

\bibitem{SaufferNedelec2011}
{\sc S.~A. Sauter and C.~Schwab}, {\em Boundary Element Methods}, vol.~39 of
  Springer Series in Computational Mathematics, Springer, 2011.

\bibitem{VanHartenScarabosio2026}
{\sc W.~G. van Harten and L.~Scarabosio}, {\em Preconditioning across parameter
  space for the parametric {H}elmholtz equation}, SIAM Journal on Scientific
  Computing,  (2026).
\newblock To appear. Preprint: arXiv:2504.00886 (2025).

\bibitem{Vecchi1999}
{\sc G.~Vecchi}, {\em Loop-star decomposition of basis functions in the
  discretization of the {EFIE}}, IEEE Transactions on Antennas and Propagation,
  47 (1999), pp.~339--346.

\bibitem{WooWangSchuhSanders1993}
{\sc A.~C. Woo, H.~T.~G. Wang, M.~J. Schuh, and M.~L. Sanders}, {\em Benchmark
  radar targets for the validation of computational electromagnetics programs},
  IEEE Antennas and Propagation Magazine, 35 (1993), pp.~84--89,
  \href{http://dx.doi.org/10.1109/74.210840}{doi:\nolinkurl{10.1109/74.210840}}.

\bibitem{YanJinNie2010}
{\sc S.~Yan, J.-M. Jin, and Z.~Nie}, {\em {EFIE} analysis of low-frequency
  problems with loop-star decomposition and {C}alder\'on multiplicative
  preconditioner}, IEEE Transactions on Antennas and Propagation, 58 (2010),
  pp.~857--867.

\end{thebibliography}

\end{document}